\DeclareFontFamily{U}{euf}{}
\DeclareFontShape{U}{euf}{m}{n}{%
  <5><6><7><8><9>gen*eufm%
  <10><10.95><12><14.4><17.28><20.74><24.88>eufm10%
  }{}
\DeclareFontShape{U}{euf}{b}{n}{%
  <5><6><7><8><9>gen*eufb%
  <10><10.95><12><14.4><17.28><20.74><24.88>eufb10%
  }{}
\DeclareFontFamily{U}{msb}{}
\DeclareFontShape{U}{msb}{m}{n}{%
  <5><6><7><8><9>gen*msbm%
  <10><10.95><12><14.4><17.28><20.74><24.88>msbm10%
  }{}
\DeclareFontFamily{U}{msa}{}
\DeclareFontShape{U}{msa}{m}{n}{%
  <5><6><7><8><9>gen*msam%
  <10><10.95><12><14.4><17.28><20.74><24.88>msam10%
  }{}
\newtheorem{theorem}{Theorem}[section]
\newtheorem{proposition}[theorem]{Proposition}
\newtheorem{corollary}[theorem]{Corollary}
\theoremstyle{definition}
\newtheorem{remark}[theorem]{Remark}
\newtheorem{definition}[theorem]{Definition}
\numberwithin{equation}{section} \frenchspacing
\def\C{\mathbb C_p}
\def\BZ{\mathbb Z}
\def\Z{\mathbb Z_p}
\def\Q{\mathbb Q_p}
\def\C{\mathbb C_p}
\def\BZ{\mathbb Z}
\def\Z{\mathbb Z_p}
\def\Q{\mathbb Q_p}
\begin{document}

\title[$p$-adic analogue of Weil's elliptic functions]
{On $p$-adic analogue of Weil's elliptic functions according to Eisenstein}

\author{Su Hu and Min-Soo Kim}

\address{ Department of Mathematics and Statistics, McGill University, 805 Sherbrooke St. West, Montr\'eal, Qu\'ebec, H3A 2K6, Canada}
\email{hus04@mails.tsinghua.edu.cn, hu@math.mcgill.ca}

\address{Division of Cultural Education, Kyungnam University,
7(Woryeong-dong) kyungnamdaehak-ro, Masanhappo-gu, Changwon-si,
Gyeongsangnam-do 631-701, South Korea }
\email{mskim@kyungnam.ac.kr}




\subjclass[2000]{11F33} \keywords{Elliptic function, $p$-adic modular form,
Eisenstein series}

\begin{abstract}
In this paper, using  $p$-adic integration with values in spaces of
modular forms, we construct the $p$-adic analogue of  Weil's elliptic functions according to Eisenstein in the  book ``Elliptic functions according to Eisenstein and and Kronecker". This construction extends
Serre's $p$-adic family of Eisenstein series in ``Formes modulaires et fonctions z\^{e}ta $p$-adiques". We show that the power series expansion  of Weil's elliptic functions also exists in the $p$-adic case.
\end{abstract}

\maketitle

\def\C{\mathbb C_p}
\def\BZ{\mathbb Z}
\def\Z{\mathbb Z_p}
\def\Q{\mathbb Q_p}


\section{Introduction}\label{introduction}

Throughout this paper, we use the following notations.
\begin{equation*}
\begin{aligned}
\qquad \mathbb{C}  ~~~&- ~\textrm{the field of complex numbers}.\\
\qquad p  ~~~&- ~\textrm{an odd rational prime number}. \\
\qquad\mathbb{Z}_p  ~~~&- ~\textrm{the ring of $p$-adic integers}. \\
\qquad\mathbb{Q}_p~~~&- ~\textrm{the field of fractions of}~\mathbb Z_p.\\
\qquad\mathbb C_p ~~~&- ~\textrm{the completion of a fixed algebraic closure}~\overline{\mathbb Q}_p~ \textrm{of}~\mathbb Q.\\
\qquad v_p ~~~&- ~\textrm{the $p$-adic valuation of}~\mathbb
C_p~\textrm{normalized so that}~ |p|_p=p^{-v_p(p)}=p^{-1}.
\end{aligned}
\end{equation*}

Suppose $W$ is a lattice in the complex plane, $\omega_{1}$ and $\omega_{2}$ are two generators of $W$, so that $W$ consists of the points $w=m_{1}\omega_{1}+m_{2}\omega_{2}$, where $m_{1}$ and $m_{2}$ are integers. In his historical book ``Elliptic functions according to Eisenstein and Kronecker"~\cite{Weil}, generalizing the Hurwitz zeta functions~\cite[p. 6]{Weil}, A. Weil~\cite[p. 14]{Weil} introduced the following elliptic function:
\begin{equation}~\label{weil}
E_{k}(x,W)=\sum_{w\in W}\frac{1}{(x+w)^{k}}=\sum_{(m_{1},m_{2})\in\mathbb{Z}^{2}}\frac{1}{(x+m_{1}\omega_{1}+m_{2}\omega_{2})^{k}}
\end{equation}
for $x\in\mathbb{R}_{> 0}$, which is also a generalization of the homogeneous Eisenstein series defined by
\begin{equation}~\label{Eisenstein}
G_{k}(W)=\sum_{0\neq w\in W}\frac{1}{w^{k}}=\sum_{(m_{1},m_{2})\in\mathbb{Z}^2\backslash(0,0)}\frac{1}{(m_{1}\omega_{1}+m_{2}\omega_{2})^{k}}.
\end{equation}
Let $\tau=\frac{\omega_{1}}{\omega_{2}}$, then
\begin{equation*}
G_{k}(\tau)=\omega_{2}^{-k}\sum_{(m_{1},m_{2})\in\mathbb{Z}^2\backslash(0,0)}\frac{1}{(m_{1}\frac{\omega_{1}}{\omega_2}+m_{2})^{k}}=\omega_{2}^{-k}\sum_{(m_{1},m_{2})\in\mathbb{Z}^2\backslash(0,0)}
\frac{1}{(m_{1}\tau+m_{2})^{k}}.
\end{equation*}

For $s\in\mathbb{C}, ~\mathfrak{R}(s) > 1$, the Riemann zeta function is defined as follows:
\begin{equation}~\label{Riemann}\zeta_{\mathbb{Q}}(s)=\sum_{n=1}^{\infty}\frac{1}{n^{s}},\end{equation}
for $x\in\mathbb{R}_{> 0}$ and $s\in\mathbb{C}$ with $\mathfrak{R}(s) > 1$, the Hurwitz zeta function is defined as follows:
\begin{equation}~\label{Hurwitz}\zeta(s,x)=\sum_{n=0}^{\infty}\frac{1}{(n+x)^{s}}.\end{equation}

\begin{remark}\label{remark} Comparing  (\ref{weil}) with (\ref{Hurwitz}), we may call (\ref{weil}) the Hurwitz-type Eisenstein series.\end{remark}

We have (see ~\cite[p. 20]{Weil}):
\begin{equation}~\label{weil2}
E_{k}(x,W)=\frac{1}{x^{k}}+(-1)^{k}\sum_{m=1}^{\infty}\binom{2m-1}{k-1}e_{2m}(W)x^{2m-k},
\end{equation}
where $$e_{2m}(W)=\frac{(2\pi i/\omega_{1})^{2m}}{(2m-1)!}\Big((-1)^{m}\frac{B_{m}}{2m}+2\sum_{N=1}^{\infty}\sigma_{2m-1}(N)q^{N}\Big),$$ and  $\sigma_{2m-1}(N)=\sum_{d|N}d^{2m-1}.$

Now we go to the $p$-adic situation.

 The counterparts of Eisenstein series ~(\ref{Eisenstein}) in the $p$-adic world
were defined by Serre in \cite[p. 205, Sec.1.6]{Serre}. Serre defined $p$-adic modular forms
as $p$-adic $q$-expansions which are uniform limits of $q$-expansions of classical modular
forms. The $p$-adic Eisenstein series are parametrized by $X=\mathbb{Z}_{p}\times\mathbb{Z}/(p-1)\mathbb{Z}$.

Here we  introduce some basic notations of $p$-adic modular form \`{a} la Serre~\cite{Serre}.

First, we recall the definition of classical Eisenstein series.

Let $k$ be an even integer and $\tau$  a complex number with
strictly positive imaginary part. Define the Eisenstein series
$G_{k}(\tau)$ of weight $k$, by the following series:

$$G_{k}(\tau) = \sum_{ (m,n)\in\mathbb{Z}^2\backslash(0,0)}
\frac{1}{(m\tau+n)^{k}}.$$

This series  absolutely converges  to a holomorphic function of
$\tau$ in the upper half-plane  and its Fourier expansion given
below shows that it extends to a holomorphic function at $\tau=
i\infty$. If $k\geq 4$ and $$\left(
    \begin{array}{cc}
     a & b \\
      c & d\\
    \end{array}\right)\in \textrm{SL}_{2}(\mathbb{Z}),
$$ then

$$G_{k} \left( \frac{ a\tau +b}{ c\tau + d} \right) = (c\tau
+d)^{k} G_{k}(\tau),$$ thus $G_{k}(\tau)$ is a modular
form of weight $k$.

Let $q=e^{2\pi i\tau}$. Then the Fourier series of the Eisenstein
series is

\begin{equation}\label{Gk}G_{k}(\tau) = -\frac{B_{k}}{2k}+\sum_{n=1}^{\infty}
\sigma_{k-1}(n)q^{n}.\end{equation}

Here, $B_{k}$ is the $k$th Bernoulli number and
$\sigma_{k-1}(n)=\sum_{d|n}d^{k-1}$.

Now we pass to the $p$-adic limit. Define, for $k\in
X=\mathbb{Z}_{p}\times\mathbb{Z}/(p-1)\mathbb{Z}$ and
$n\geq 1$: $\sigma_{k-1}^{*}(n)=\sum_{\substack{d|n\\
(p,d)=1}}d^{k-1}$. (See \cite[p. 205]{Serre}). If $k$ is even, there exists a sequence of even
integers $\{k_{i}\}_{i=1}^{\infty}$ such that
$|k_{i}|\rightarrow\infty$ and $k_{i} \rightarrow k$ when
$i\to\infty$. Then the sequence $G_{k_{i}}=
-\frac{B_{k_{i}}}{2k_{i}}+\sum_{n=1}^{\infty}
\sigma_{k_{i}-1}(n)q^{n}$ has a limit:
\begin{equation}\label{GK*}G_{k}^{*}=a_{0}+\sum_{n=1}^{\infty} \sigma_{k-1}^{*}(n)q^{n}\end{equation}
with
$a_{0}=\frac{1}{2}\lim_{i\to\infty}\zeta(1-k_{i})\equiv
\frac{1}{2}\zeta^{*}(1-k)$.

The function $\zeta^{*}$ is thus defined on the odd elements of
$X\setminus\{1\}$.

We have the following result on $\zeta^{*}$.

\begin{theorem}[see Serre {\cite[p. 206, Theorem 3]{Serre}}]~\label{serre} Let $\chi$ be a character
on $\mathbb{Z}_{p}$ and let $L_{p}(s,\chi)$ be its the $p$-adic $L$-function. If $(s,u)\not=1$ is an odd element of
$X=\mathbb{Z}_{p}\times\mathbb{Z}/(p-1)\mathbb{Z},$ then
$$\zeta^{*}(s,u)=L_{p}(s,\omega^{1-u}),$$ where $\omega$ is the Teichm\"uller character.\end{theorem}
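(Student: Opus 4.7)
The plan is to verify the claimed identity on a $p$-adically dense subset of odd elements of $X\setminus\{1\}$ where both sides admit a closed-form evaluation, and then conclude by continuity. The natural dense subset consists of pairs $(1-k_0,\,1-k_0\bmod (p-1))$ with $k_0$ a positive even integer; these are odd elements of $X$, and the collection of admissible $k_0$ is $p$-adically dense in $\mathbb{Z}_p$ for any fixed residue mod $p-1$.

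To evaluate the left-hand side, fix such a $k_0\geq 2$ and choose a sequence $\{k_i\}$ of positive even integers with $k_i\to k_0$ in $X$ and $|k_i|\to\infty$. By construction~\eqref{GK*},
\[
\zeta^{*}(1-k_0)\;=\;\lim_{i\to\infty}\zeta(1-k_i)\;=\;-\lim_{i\to\infty}\frac{B_{k_i}}{k_i}.
\]
Kummer's congruences assert that $(1-p^{k-1})\zeta(1-k)$ varies $p$-adically continuously through even integers in a fixed class mod $p-1$; since $|k_i|\to\infty$ forces $p^{k_i-1}\to 0$ in $\mathbb{Z}_p$, the limit yields $\zeta^{*}(1-k_0)=(1-p^{k_0-1})\zeta(1-k_0)$.

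For the right-hand side, the defining interpolation of the Kubota--Leopoldt $p$-adic $L$-function reads
\[
L_p(1-n,\chi)\;=\;-\bigl(1-\chi\omega^{-n}(p)\,p^{n-1}\bigr)\frac{B_{n,\chi\omega^{-n}}}{n}\qquad(n\geq 1).
\]
With $n=k_0$, $\chi=\omega^{1-u}$, and $u\equiv 1-k_0\pmod{p-1}$, the twist $\chi\omega^{-n}=\omega^{1-u-k_0}$ is trivial; hence $B_{k_0,\chi\omega^{-k_0}}=B_{k_0}$, the Euler factor collapses to $(1-p^{k_0-1})$, and $L_p(1-k_0,\omega^{1-u})=(1-p^{k_0-1})\zeta(1-k_0)$, matching the first evaluation. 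Both $\zeta^{*}$ (by Serre's construction) and $s\mapsto L_p(s,\omega^{1-u})$ (by Kubota--Leopoldt) are $p$-adically continuous on their domains, so the two analytic functions must agree on all of $X\setminus\{1\}$ by density.

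The main obstacle is the careful bookkeeping of the character twist $\omega^{1-u}$ against the second coordinate $u$, arranging that $\chi\omega^{-n}$ trivializes precisely on the dense set from Step~1; this matching is the origin of the index shift ``$1-u$'' in the statement. A secondary technical point is that Serre's limit construction must produce a single value $\zeta^{*}(1-k)$ independent of the interpolating sequence $\{k_i\}$, which again reduces to Kummer.
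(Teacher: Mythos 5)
The paper gives no proof of this statement at all: it is quoted directly from Serre (Th\'eor\`eme 3 of \cite{Serre}), so there is nothing internal to compare against. Your argument is, in substance, the standard one (and essentially Serre's own): evaluate both sides at the dense family of points $(1-k_0,\,1-k_0 \bmod (p-1))$ with $k_0$ a positive even integer, using the limit definition of $\zeta^*$ together with Kummer-type congruences on the left and the Kubota--Leopoldt interpolation formula on the right, then conclude by continuity. Your bookkeeping is right: the admissible $k_0$ are dense in $\mathbb{Z}_p$ within each class mod $p-1$ (because $p-1$ is a unit in $\mathbb{Z}_p$), the twist $\chi\omega^{-k_0}=\omega^{1-u-k_0}$ is indeed trivial when $u\equiv 1-k_0 \pmod{p-1}$, and $|k_i|\to\infty$ does force $p^{k_i-1}\to 0$ so that the Euler factor appears in the limit.

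Two points should be tightened. First, on the branch $u\equiv 1\pmod{p-1}$, i.e.\ $k_0\equiv 0\pmod{p-1}$, the classical Kummer congruence you invoke does not apply as stated (it requires the weight to be $\not\equiv 0 \pmod{p-1}$; by von Staudt--Clausen $p$ divides the denominator of $B_k/k$ on that branch). The cleanest repair, using only tools you already have, is to note that $(1-p^{k-1})\zeta(1-k)=L_p(1-k,\chi_0)$ for $(p-1)\mid k$, $k\geq 2$, and then use analyticity of $L_p(s,\chi_0)$ away from its simple pole at $s=1$ (legitimate since $1-k_i\to 1-k_0\neq 1$); alternatively cite the extended Kummer congruences. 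Second, continuity of $\zeta^*$ in $s$ is not automatic from the pointwise limit definition in the paper's Eq.\ for $G_k^*$; it rests on the uniformity of those congruences (Serre's Th\'eor\`eme 2 and its corollaries), i.e.\ $\zeta^*$ is the unique continuous extension of $k\mapsto (1-p^{k-1})\zeta(1-k)$ from the even integers in a fixed class --- you gesture at this, but it is the crux of both the well-definedness and the density argument. With these two repairs your proof is complete and coincides with the argument Serre himself gives.
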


For $k=(s,u)\in X$ and $u$ is even, the coefficients of
$G_{k}^{*}=G_{s,u}^{*}$ are given by
\begin{equation}\label{eq1}
\begin{aligned}
a_{0}(G_{s,u}^{*})&=\frac{1}{2}\zeta^{*}(1-s,1-u)=\frac{1}{2}L_{p}(1-s,\omega^{u}),\\
a_{n}(G_{s,u}^{*})&=\sum_{\substack{d|n\\
(p,d)=1}} d^{-1}\omega(d)^{u}\langle d
\rangle^{s}\end{aligned}\end{equation}(see \cite[p.~245]{Serre}).

Thus the assignment
$$(s,u)\mapsto G_{s,u}^{*}$$
gives a family of $p$-adic modular forms parametrized by the group of weights $X$.

In the $p$-adic world, the counterpart of the Hurwitz zeta functions~(\ref{Hurwitz}) was first defined by Washington on $\mathbb{Q}_{p}\setminus\mathbb{Z}_{p}$ in his book~\cite{Wa2} by using the power series expansions, that is
$$H_{p}(s,a,F)=\frac{1}{s-1}\frac{1}{F}\langle a \rangle^{1-s}\sum_{j=0}^{\infty}\binom{1-s}{j}(B_{j})\left(\frac{F}{a}\right)^{j},$$
where $|s|_{p}<R_{p}=p^{(p-2)/(p-1)},$ $a$ and $F$ are integers with $0 < a < F$, $p\mid F$, $p\nmid a$ and $B_{j}$ is the $j$th Bernoulli number (see \cite[p.~55]{Wa2}).

 Recently, using the $p$-adic Volkenborn integral~\cite{Volkenborn1,Volkenborn2}, Cohen~\cite[Chapter 11]{Co2} and  Tangedal-Young~\cite{TP} extended the domain of definition of $p$-adic Hurwitz zeta functions to $\mathbb{C}_{p}$.

 First, we recall Tangedal-Young's~\cite{TP} definition of
$p$-adic Hurwitz zeta functions $\zeta_{p}(s,x)$ for
$x\in\mathbb{C}_{p}\setminus\mathbb{Z}_{p}$.

Let $UD(\mathbb Z_p)$ be the space of uniformly (or strictly)
differentiable function on $\mathbb Z_p.$ Then the Volkenborn
integral of $f$ is defined by
\begin{equation}\label{-q-e}
\int_{\mathbb Z_p}f(z)da
=\lim_{N\rightarrow\infty}\frac1{p^N}\sum_{a=0}^{p^N-1}f(a)
\end{equation} and this limit always exists when $f\in UD(\mathbb
Z_p)$ (see \cite[p.~264]{Ro})

The projection function $\langle x\rangle$ is defined for all
$x\in \mathbb{C}_p^{\times}$, as was done by Kashio~\cite{Kashio}
and by  Tangedal-Young in~\cite{TP}, for details, see Section~\ref{a0} below.
We also define
$\omega_v(\cdot)$ on $\mathbb{C}_{p}^{\times}$ by
$\omega_v(x)={x}/{\langle x\rangle}.$

\begin{definition}[{see Tangedal and Young ~\cite[p.~1248, (3.2)]{TP}}]\label{p-E-zeta}
For $x\in \mathbb{C}_{p}\backslash \mathbb{Z}_{p}$, we define the
$p$-adic Hurwitz zeta function $\zeta_{p}(s,x)$ by the following
formula
\begin{equation}~\label{pz1}\zeta_{p}(s,x)=\frac{1}{s-1}\int_{\Z}\omega_{v}(x+a)\langle x+a\rangle^{1-s}da.\end{equation}
\end{definition}
\begin{remark} Tangedal and Young defined a more general
$p$-adic Hurwitz zeta functions in the multiple case
(\cite[p.~1248]{TP}).
\end{remark}

In his book~\cite[Chapter 11]{Co2}, Cohen gave a definition of
the $p$-adic Hurwitz zeta functions $\zeta_{p}(s,x)$ for
$x\in\mathbb{Z}_{p}$.

Here we recall his definitions.

Let $\chi$ be a Dirichlet character modulo $p^{v}$ for some $v$.
We can extend the definition of $\chi$ to $\mathbb{Z}_{p}$ as in
\cite[p.~281]{Co2}, that is, if $a_{n}\in\mathbb{Z}$ and
$a_{n}$ is a sequence tending to $a$ $p$-adically, we have
$v_{p}(a_{n}-a_{m})\geq v$ for $n$ and $m$ sufficiently large, so
$\chi(a_{n})$ is an ultimately constant sequence, and we set
$\chi(a)=\chi(a_{n})$ for $v_{p}(a-a_{n})\geq v$. A character $\chi$ is called
a Dirichlet character on $\mathbb{Z}_{p}$ and $\chi$ is even if
$\chi(-1)=1$. In particular, $\omega^{u}$ are such characters.
$\omega^{u}$ is even if and only if $u$ is even.

\begin{definition}[{see Cohen~\cite[p. 291, Definition 11.2.12]{Co2}}]\label{p-E-zeta2}
Let $\chi$ be a character modulo $p^{v}$ with $v\geq 1$. If
$x\in\mathbb{Z}_{p}$ and $s\in\mathbb{C}_{p}$ such that
$|s|_{p}<R_{p}=p^{(p-2)/(p-1)}$ and $s\not=1$,  we define
\begin{equation}\label{pz2}\zeta_{p}(s,\chi,x)=\frac{1}{s-1}\int_{\mathbb{Z}_{p}}\chi(x+a)\langle x+a \rangle^{1-s}da,\end{equation}
and we will simply write $\zeta_{p}(s,x)$ instead of
$\zeta_{p}(s,\chi_{0},x)$, where $\chi_{0}$ is a trivial character
modulo $p^{v}$.\end{definition}

Let $L_{p}(s,\chi)$ be the $p$-adic $L$-function defined by Iwasawa using the method of $p$-adic interpolations. (See ~\cite[p. 29, Theorem 3]{Iwa}).
By ~\cite[p. 295, Proposition 11.2.20 (3)]{Co2}, we have $L_{p}(s,\chi)=\zeta_{p}(s,\chi,0)$. Thus Cohen's definitions of
$p$-adic Hurwitz zeta functions  extend Iwasawa's definitions of $p$-adic $L$-functions.

Let $A_{1}=\mathbb{Z}_{p}$, $A_{2}=\{\omega^{u}~|~
u~\textrm{is even}\}$, $B_{1}=\{s\in\mathbb{C}_{p}~|~|s|_{p} < R_{p}=p^{(p-2)/(p-1)}\},$
$B_{2}$ be the set of even Dirichlet characters on $\mathbb{Z}_{p}$, $B_{3}=\mathbb{Z}_{p}$, $C_{1}=\mathbb{C}_{p}$ and
$C_{2}=\mathbb{C}_{p}\setminus\mathbb{Z}_{p}$.

In conclusion,  the definitions of the $p$-adic Hurwitz zeta functions have been extended
by the following graph:

\begin{equation}\label{graph}
\begin{aligned}
\zeta^{*}(s,u),&~~ (s,u)\in X\longrightarrow \zeta_{p}(s,x),~~ (s,x)\in C_{1}\times C_{2}~(\textrm{Tangedal \& Young})\\
&\downarrow\\
L_{p}(s,\omega^{u}), &~~ (s,\omega^{u})\in A_{1}\times A_{2}\\
&\downarrow\\
L_{p}(s,\chi),&~~ (s,\chi)\in B_{1}\times B_{2}~(\textrm{Iwasawa})\\
&\downarrow\\
\zeta_{p}(s,\chi,x),&~~ (s,\chi,x)\in B_{1}\times B_{2}\times B_{3}~(\textrm{Cohen}).
\end{aligned}
\end{equation}

In this note, using $p$-adic integration with values in
spaces of modular forms, involving Cohen's~\cite[Chapter 11]{Co2} and  Tangedal-Young's~\cite{TP} definitions of $p$-adic Hurwitz zeta functions on $\mathbb{C}_{p}$,
we shall construct the  counterpart of  Weil's elliptic functions~(\ref{weil}) in the $p$-adic world. As in the complex case, this construction generalizes the definition of the $p$-adic Eisenstein series. In other words, this construction also extends the  parameter space of
$p$-adic  family of Eisenstein series (see Proposition~\ref{sg} below).

 The following parts of this paper are organized as follows.

For $k=(s,u)\in X=\mathbb{Z}_{p}\times\mathbb{Z}/(p-1)\mathbb{Z}$ with $u$ even,
 let $G^{*}_{s,u}=a_{0}(s,u)+\sum_{n=1}^{\infty}a_{n}(s,u)q^{n}$ be the $p$-adic family of Eisenstein series, where $a_{0}(s,u)=a_{0}(G_{s,u}^{*})$
 and $a_{n}(s,u)=a_{n}(G_{s,u}^{*})$ for $n\geq 1$ (see Eq. (\ref{eq1})).
 In Sections \ref{a0} and \ref{an}, we will extend the definitions of $a_{0}(s,u)$ and $a_{n}(s,u)$, $n\geq 1$, respectively.
In the last section, we show that the power series expansion  of Weil's elliptic functions (\ref{weil2})
also exists in the $p$-adic case. (See Theorem~\ref{power} below).

\section{$a_{0}(s,u)$}~\label{a0}
Denote by $A_{1}=\mathbb{Z}_{p}$ and $A_{2}=\{\omega^{u}~|~
u~\textrm{is even}\}$. For $(s,u)\in A_{1}\times A_{2}$, from
Eq.~(\ref{eq1}) in Section \ref{introduction}, we have
$a_{0}(G_{s,u}^{*})=\frac{1}{2}L_{p}(1-s,\omega^{u})$, thus the assignment
$$(s,u)\mapsto a_{0}(G_{s,u}^{*})$$
gives a family of $p$-adic functions parametrized by the group
 $A_{1}\times A_{2}$.

In this section, we extend the parameter space of
$a_{0}(s,u)=a_{0}(G_{s,u}^{*})$ to the general case.

First, we recall some definitions and results of
$p$-adic Hurwitz zeta functions.

The projection function $\langle x\rangle$ is defined for all
$x\in \mathbb{C}_p^{\times}$, as was done by Kashio~\cite{Kashio}
and by  Tangedal-Young in~\cite{TP}.

After fixing an embedding of $\bar{\mathbb{Q}}$ into $\mathbb{C}_{p}$.
Let $p^{\mathbb{Q}}$ denote the image in $\mathbb{C}_{p}^{\times}$  of
the set of positive real rational powers of $p$ under this
embedding and let $\mu$ denote the group of roots of unity in
$\mathbb{C}_{p}^{\times}$ of order not divisible by $p$. For
$x\in\mathbb{C}_{p}$, $|x|_{p}=1$, there exists a unique elements
$\hat{x}\in\mu$ such that $|x-\hat{x}|_{p} < 1$ (called the
Teichm\"uller representative of $x$); it may be defined by
$\hat{x}=\lim_{n\to\infty}x^{p^{n!}}$. We extend this definition to
$x\in\mathbb{C}_{p}^{\times}$ by
\begin{equation} \hat{x}:=(\widehat{x/p^{v_{p}(x)}}),\end{equation}
that is, we define $\hat{x}=\hat{u}$ if $x=p^{r}u$ with $p^{r}\in
p^{\mathbb{Q}}$ and $|u|_{p}=1$, then we define the function
$\langle\cdot\rangle$ on $\mathbb{C}_{p}^{\times}$ by $$\langle
x\rangle=p^{-v_{p}(x)}x/\hat{x},$$ and we also define
$\omega_v(\cdot)$ on $\mathbb{C}_{p}^{\times}$ by
$$\omega_v(x)=\frac{x}{\langle x\rangle}.$$  From this we get an
internal product decomposition of multiplicative groups
\begin{equation}
\mathbb{C}_{p}^{\times}\simeq p^{\mathbb{Q}}\times\mu\times D
\end{equation}
where $D=\{x\in\mathbb{C}_{p}: |x-1|_{p} < 1\},$ given by
\begin{equation}
x=p^{v_{p}(x)}\cdot\hat{x}\cdot\langle x\rangle\mapsto
(p^{v_{p}(x)},\hat{x},\langle x\rangle).\end{equation} As remarked
by Tangedal and Young in~\cite{TP}, this decomposition of
$\mathbb{C}_{p}^{\times}$ depends on the choice of embedding of
$\bar{\mathbb{Q}}$ into $\mathbb{C}_{p}$; the projections
$p^{v_{p}(x)},\hat{x},\langle x\rangle$ are uniquely determined up
to roots of unity. However for $x\in\mathbb{Q}_{p}^{\times}$ the
projections $p^{v_{p}(x)},\hat{x},\langle x\rangle$ are uniquely
determined and do not depend on the choice of the embedding. Notice
that the projections $x\mapsto p^{v_{p}(x)}$ and $x\mapsto \hat{x}$
are constant on discs of the form $\{x\in\mathbb{C}_{p}:|x-y|_{p} <
|y|_{p}\}$ and therefore have derivative zero whereas the
projections $x\mapsto\langle x\rangle$ has derivative
$\frac{d}{dx}\langle x\rangle=\langle x\rangle/x$.

For $x\in\mathbb{C}_{p}^{\times}$ and $s\in\mathbb{C}_{p}$ we define
$\langle x\rangle^{s}$ (~\cite[p. 141]{SC} ) by
\begin{equation} \langle
x\rangle^{s}=\sum_{n=0}^{\infty}\binom{s}{n}(\langle
x\rangle-1)^{n}\end{equation} which converges for $(x,s)$ as noted in Proposition~\ref{analytic} below.

We have the following result on the analytic properties of the $p$-adic function $\langle x\rangle^{s}$.

\begin{proposition}[{see Tangedal and Young ~\cite[p. 1245, Proposition 2.1]{TP}}]\label{analytic} For any
 $x\in\mathbb{C}_{p}^{\times}$ the function $s\mapsto\langle
 x\rangle^{s}$ is a $C^{\infty}$ function of $s$ on
 $\mathbb{Z}_{p}$ and  is an analytic function of
$s$ on the disc $|s|_{p} < p^{-1/(p-1)}|\rm{log}$$_{p}\langle x\rangle|_{p}^{-1}$; on this disc it is locally analytic as a function of $x$ and
 independent of the choice made to define the $\langle
 \cdot\rangle$ function. If $x$ lies in a finite extension $K$ of
 $\mathbb{Q}_{p}$ whose ramification index over $\mathbb{Q}_{p}$ is
 less than $p-1$, then  $s\mapsto\langle
 x\rangle^{s}$  is analytic for $|s|_{p} <
 |\pi|_{p}^{-1}p^{-1/(p-1)}$, where $(\pi)$ is the maximal ideal of
 the ring of integers $O_{K}$ of $K$. If $s\in\mathbb{Z}_{p}$, then the
 function $s\mapsto\langle
 x\rangle^{s}$ is an analytic function of $x$  on any disc of the form $\{x\in\mathbb{C}_{p}:|x-y|_{p} <
|y|_{p}\}$.
 \end{proposition}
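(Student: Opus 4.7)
The plan is to express $\langle x\rangle^{s}$ via the $p$-adic exponential and to transfer the classical convergence results for $\exp_{p}$ and $\log_{p}$ through this identification. Since $\langle x\rangle\in D$, we have $|\langle x\rangle - 1|_{p}<1$, so both the binomial series defining $\langle x\rangle^{s}$ and the series $\log_{p}\langle x\rangle=-\sum_{n\geq 1}(1-\langle x\rangle)^{n}/n$ converge. The first step is to verify the identity
\[
\langle x\rangle^{s}\;=\;\exp_{p}\bigl(s\,\log_{p}\langle x\rangle\bigr)
\]
wherever both sides are defined, either by matching Mahler expansions on the two sides or by the standard term-by-term manipulation of binomial and exponential series in the style of Schikhof~\cite{SC} or Robert~\cite{Ro}.

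With this identification in hand, analyticity of $s\mapsto\langle x\rangle^{s}$ is immediate from the fact that $\exp_{p}(z)$ is analytic on $|z|_{p}<p^{-1/(p-1)}$: substituting $z=s\log_{p}\langle x\rangle$ yields precisely the disc $|s|_{p}<p^{-1/(p-1)}|\log_{p}\langle x\rangle|_{p}^{-1}$. For the $C^{\infty}$ claim on $\mathbb{Z}_{p}$ I would appeal instead to the Mahler-coefficient characterization of smoothness: the coefficients $a_{n}=(\langle x\rangle - 1)^{n}$ satisfy $|a_{n}|_{p}=c^{n}$ with $c<1$, so $n^{k}|a_{n}|_{p}\to 0$ for every $k\geq 0$, forcing the sum to be $C^{\infty}$. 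For local analyticity in $x$, the key observation is that on any disc $\{x:|x-y|_{p}<|y|_{p}\}$ the projections $x\mapsto p^{v_{p}(x)}$ and $x\mapsto\hat{x}$ are constant, so $\langle x\rangle=c\cdot x$ for a constant $c$ depending only on $y$; hence $\langle x\rangle^{s}=\langle y\rangle^{s}\bigl(1+(x-y)/y\bigr)^{s}$ is a convergent binomial series in $x-y$. Independence from the chosen embedding of $\bar{\mathbb{Q}}$ into $\mathbb{C}_{p}$ follows because any two admissible decompositions differ in $p^{v_{p}(x)}$ and $\hat{x}$ by compensating roots of unity, leaving $\langle x\rangle$ as the canonical factor in $D$.

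For the ramified case, if $K/\mathbb{Q}_{p}$ has ramification index $e<p-1$ and $x\in K^{\times}$, then $\langle x\rangle\in 1+\pi O_{K}$, so $\log_{p}$ maps $\langle x\rangle$ into $\pi O_{K}$ and $|\log_{p}\langle x\rangle|_{p}\leq|\pi|_{p}$; substituting into the disc condition enlarges the admissible range of $s$ to $|s|_{p}<|\pi|_{p}^{-1}p^{-1/(p-1)}$. The main technical obstacle throughout is the interlocking bookkeeping of several radii of convergence: one must ensure that $\exp_{p}$ and $\log_{p}$ are genuinely mutually inverse on the relevant subsets of $\mathbb{C}_{p}$ and that the equality $\langle x\rangle^{s}=\exp_{p}(s\log_{p}\langle x\rangle)$ reproduces the binomial series term-by-term on the full disc, rather than merely on a smaller sub-disc where both trivially match.
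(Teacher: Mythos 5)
The paper itself offers no proof of this proposition: it is imported verbatim from Tangedal and Young \cite[Proposition~2.1]{TP}, so there is no internal argument to compare against. Your strategy --- identifying $\langle x\rangle^{s}$ with $\exp_{p}\bigl(s\log_{p}\langle x\rangle\bigr)$ and transferring the convergence discs of $\exp_{p}$ and $\log_{p}$ --- is the standard route and, so far as the citation indicates, essentially that of the source; the Mahler-coefficient criterion for the $C^{\infty}$ claim on $\mathbb{Z}_{p}$ and the factorization $\langle x\rangle=\langle y\rangle\bigl(1+(x-y)/y\bigr)$ on a disc $\{x:|x-y|_{p}<|y|_{p}\}$ are both sound.

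Three points need repair. First, your justification of independence of the choices is false as stated: changing the embedding of $\bar{\mathbb{Q}}$ multiplies $p^{v_{p}(x)}$ by a root of unity and, after re-extracting the Teichm\"uller part, multiplies $\langle x\rangle$ itself by a root of unity of $p$-power order; $\langle x\rangle$ is \emph{not} a canonical factor (the paper records that the three projections are determined only up to roots of unity, and are canonical only for $x\in\mathbb{Q}_{p}^{\times}$). The correct argument, available inside your own framework, is that $\log_{p}$ annihilates all roots of unity, so $\log_{p}\langle x\rangle$, and hence $\exp_{p}(s\log_{p}\langle x\rangle)$ on the stated disc, is unaffected by the choice. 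Second, in the ramified case the claim $\langle x\rangle\in 1+\pi O_{K}$ can fail when $v_{p}(x)\notin\mathbb{Z}$, since then $p^{v_{p}(x)}$, and with it $\langle x\rangle$, need not lie in $K$ at all; what is true and suffices is $\log_{p}\langle x\rangle=\log_{p}x$, and writing $x=\pi^{a}\zeta u_{1}$ with $\zeta$ a Teichm\"uller unit and $u_{1}\in 1+\pi O_{K}$ gives $|\log_{p}x|_{p}\le|\pi|_{p}$, using $e\log_{p}\pi=\log_{p}(\pi^{e}/p)$ and the fact that $e<p-1$ forces $|\pi|_{p}<p^{-1/(p-1)}$, where $\log_{p}$ is an isometry. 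Third, the ``technical obstacle'' you flag at the end is real and cannot be waved away: for general $s\in\mathbb{C}_{p}$ the binomial series is only guaranteed to converge for $|s|_{p}<p^{-1/(p-1)}|\langle x\rangle-1|_{p}^{-1}$ (besides $s\in\mathbb{Z}_{p}$), and this can be strictly smaller than the stated disc $|s|_{p}<p^{-1/(p-1)}|\log_{p}\langle x\rangle|_{p}^{-1}$, e.g.\ when $\langle x\rangle$ is a nontrivial $p$-power root of unity (take $x=\zeta_{p}$, so $\hat{x}=1$, $\langle x\rangle=\zeta_{p}$, $\log_{p}\langle x\rangle=0$). So the term-by-term identification can only hold on the common subdisc --- proved, say, by checking equality on $p^{N}\mathbb{Z}_{p}$ for $N$ large and invoking the identity theorem --- while on the full stated disc the analytic function of the proposition must be taken to be $\exp_{p}(s\log_{p}\langle x\rangle)$ itself; your write-up should say this explicitly rather than hope the two series match everywhere.
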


Using the above result, Tangedal and Young proved the following
analytic properties for $\zeta_{p}(s,x)$, where the $p$-adic Hurwitz zeta function $\zeta_{p}(s,x)$ was defined by Eq.~(\ref{pz1}).

\begin{theorem}[{see Tangedal and Young  ~\cite[p. 1248, Theorem 3.1]{TP}}]
For any  choice of $x\in\mathbb{C}_{p}\backslash \mathbb{Z}_{p}$
the function $\zeta_{p}(s,x)$ is a $C^{\infty}$ function of $s$ on
$\mathbb{Z}_{p}\backslash\{1\}$, and is an analytic function of
$s$ on the disc $|s|_{p} < p^{-1/(p-1)}|\rm{log}$$_{p}\langle x\rangle|_{p}^{-1}$; on this disc it is
locally analytic as a function of $x$ and independent of the
choice made to define the $\langle\cdot\rangle$ function. If $x$
is so chosen to lie in a finite extension $K$ of $\mathbb{Q}_{p}$
whose ramification index over $\mathbb{Q}_{p}$ is less than $p-1$,
then $\zeta_{p}(s,x)$ is analytic for $|s|_{p} <
 |\pi|_{p}^{-1}p^{-1/(p-1)}$, except for a simple pole at $s=1$. If $s\in\mathbb{Z}_{p}\backslash\{1\}$, then the function
 $\zeta_{p}(s,x)$ is locally analytic as a function of $x$ on
 $\mathbb{C}_{p}\backslash\mathbb{Z}_{p}$.
\end{theorem}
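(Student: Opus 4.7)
The plan is to transfer the analytic and $C^{\infty}$ properties of the integrand $\omega_v(x+a)\langle x+a\rangle^{1-s}$, which are already supplied by Proposition~\ref{analytic}, through the Volkenborn integral. The crucial preliminary observation is that for $x\in\mathbb{C}_p\setminus\mathbb{Z}_p$ one has $v_p(x)<0$, so $v_p(x+a)=v_p(x)$ is \emph{constant} in $a\in\mathbb{Z}_p$ and $x+a$ stays in a compact subset of $\{y\in\mathbb{C}_p:|y|_p=|x|_p\}$. Consequently $\widehat{x+a}$ takes only finitely many values as $a$ ranges over $\mathbb{Z}_p$, the map $a\mapsto\omega_v(x+a)=p^{v_p(x)}\widehat{x+a}$ is locally constant, and $|{\log}_p\langle x+a\rangle|_p$ is uniformly bounded in $a$; hence the disc of analyticity supplied by Proposition~\ref{analytic} can be chosen uniformly in $a$.

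For the analyticity in $s$, I would expand
\[
\langle x+a\rangle^{1-s}=\sum_{n=0}^{\infty}\binom{1-s}{n}(\langle x+a\rangle-1)^n,
\]
where the uniform bound $|\langle x+a\rangle-1|_p\le c<1$ forces uniform convergence in $a$. Since $\omega_v(x+a)$ is locally constant in $a$ (and therefore trivially uniformly differentiable), one may swap the sum with the Volkenborn integral to obtain
\[
\zeta_p(s,x)=\frac{1}{s-1}\sum_{n=0}^{\infty}\binom{1-s}{n}\int_{\mathbb{Z}_p}\omega_v(x+a)(\langle x+a\rangle-1)^n\,da.
\]
The binomial coefficients $\binom{1-s}{n}$ are polynomials in $s$ satisfying the standard $p$-adic estimate that makes the series in $s$ converge on the announced discs, which yields both $C^{\infty}$ regularity on $\mathbb{Z}_p\setminus\{1\}$ and analyticity on $|s|_p<p^{-1/(p-1)}|{\log}_p\langle x\rangle|_p^{-1}$. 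The stronger radius when $x$ lies in a tamely ramified extension of $\mathbb{Q}_p$ transfers from Proposition~\ref{analytic} in exactly the same fashion.

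Local analyticity in $x$ would be obtained by writing $\langle y+a\rangle^{1-s}=\exp_p((1-s){\log}_p\langle y+a\rangle)$ and expanding ${\log}_p\langle y+a\rangle$ as a Taylor series about $y=x$, using the formula $\frac{d}{dx}\langle x\rangle=\langle x\rangle/x$ noted in the preamble; term-by-term Volkenborn integration then produces the claimed local power series in $y-x$. Independence from the embedding of $\bar{\mathbb{Q}}$ into $\mathbb{C}_p$ is inherited from the integrand, in which the root-of-unity ambiguities of $\widehat{x+a}$ and $\langle x+a\rangle$ cancel inside the combination $\omega_v(x+a)\langle x+a\rangle^{1-s}$ on the analyticity disc. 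The simple pole at $s=1$ is immediate from the explicit prefactor $1/(s-1)$ together with the generic non-vanishing of $\int_{\mathbb{Z}_p}\omega_v(x+a)\,da$.

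The main obstacle is rigorously justifying the interchange of the Volkenborn integral with the infinite sum and with the analytic or $C^{\infty}$ dependence on $s$ and $x$. Unlike a positive measure, the Volkenborn integral is defined as a Riemann-type limit and no dominated convergence theorem is available; one has to verify that the relevant partial sums lie in $UD(\mathbb{Z}_p)$ with uniformly controlled differentiability constants in the parameter, and then pass to the limit directly from the defining averages $p^{-N}\sum_{a=0}^{p^N-1}$. The reduction achieved in the first step -- constant $v_p(x+a)$ and compact range for $x+a$ -- is precisely what makes this verification tractable.
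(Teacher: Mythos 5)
First, a point of comparison: the paper itself offers no proof of this statement --- it is quoted verbatim from Tangedal--Young \cite[Theorem 3.1]{TP} and used as input --- so your sketch is really an attempt to reconstruct their argument from Proposition~\ref{analytic} and the Volkenborn integral. The shape is reasonable, but there is a concrete error at the start. Your ``crucial preliminary observation'' that $x\in\mathbb{C}_p\setminus\mathbb{Z}_p$ forces $v_p(x)<0$ is false: $\mathbb{C}_p\setminus\mathbb{Z}_p$ contains many elements of nonnegative valuation, e.g.\ $x=\sqrt{p}$ (where $v_p(x)=\tfrac12$, and $v_p(x+0)=\tfrac12$ while $v_p(x+1)=0$, so $v_p(x+a)$ is not constant in $a$) or a root of unity generating an unramified extension (where $v_p(x)=0$). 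For such $x$ the set $x+\mathbb{Z}_p$ does not lie in the sphere $|y|_p=|x|_p$ and $\omega_v(x+a)\ne p^{v_p(x)}\widehat{x+a}$. What is true, and what your argument actually needs, is only that $d(x,\mathbb{Z}_p)>0$ by compactness of $\mathbb{Z}_p$, so that $x+\mathbb{Z}_p$ is a compact subset of $\mathbb{C}_p^{\times}$ on which $v_p$ and $y\mapsto\hat{y}$ are locally constant; this yields local constancy of $a\mapsto\omega_v(x+a)$ and boundedness of $|\log_p\langle x+a\rangle|_p$. The step is repairable, but note that the case $|x|_p\le 1$, $x\notin\mathbb{Z}_p$, which your observation excludes, is precisely the case not covered by the expansion results used later in the paper (Theorems~\ref{TP1} and \ref{TP2} assume $|x|_p>1$).

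Beyond that there are two genuine gaps. (i) The radius of analyticity: your binomial expansion controls each integrand on a disc governed by $\sup_{a\in\mathbb{Z}_p}|\langle x+a\rangle-1|_p$ (at best by $\sup_a|\log_p\langle x+a\rangle|_p$), whereas the theorem asserts analyticity on $|s|_p<p^{-1/(p-1)}|\log_p\langle x\rangle|_p^{-1}$, a disc defined by $\langle x\rangle$ alone; you assert convergence ``on the announced discs'' without relating $\log_p\langle x+a\rangle$ to $\log_p\langle x\rangle$, and no such relation is automatic (when $|\langle x\rangle-1|_p\ge p^{-1/(p-1)}$ the stated disc can be strictly larger than what the crude binomial estimate gives). (ii) The interchange of the Volkenborn integral with the series and with the parameter, which you yourself flag as ``the main obstacle,'' is the actual analytic content of the theorem: the Volkenborn integral is continuous only with respect to the strict-differentiability norm (one has $|\int_{\mathbb{Z}_p}f\,da|_p\le p\,\|f\|_1$, with $\|f\|_1$ involving difference quotients), so one must prove convergence of the partial sums in that norm, uniformly in $s$ (resp.\ locally in $x$); acknowledging the issue does not discharge it. Similarly, the ``simple pole at $s=1$'' requires showing that the residue $\int_{\mathbb{Z}_p}\omega_v(x+a)\,da$ is nonzero for the given $x$, which ``generic non-vanishing'' does not do. As it stands the proposal is an outline of the Tangedal--Young strategy rather than a proof.
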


Recall $C_{1}=\mathbb{C}_{p}$ and
$C_{2}=\mathbb{C}_{p}\setminus\mathbb{Z}_{p}$ as in Section~\ref{introduction}. For  $(s,x)\in C_{1}\times C_{2}$, if we  set
\begin{equation}~\label{a0sx}
a_{0}(s,x)=\frac{1}{2}\zeta_{p}(1-s,x),\end{equation} then from the
above theorem we have the following properties for $a_{0}(s,x)$.

\begin{corollary}
For any  choice of $x\in\mathbb{C}_{p}\backslash \mathbb{Z}_{p}$
the function $a_{0}(s,x)$ is a $C^{\infty}$ function of $s$ on
$\mathbb{Z}_{p}\backslash\{0\}$, and is an analytic function of
$s$ on the disc $|s-1|_{p} < p^{-1/(p-1)}|\rm{log}$$_{p}\langle x\rangle|_{p}^{-1}$; on this disc it is
locally analytic as a function of $x$ and independent of the
choice made to define the $\langle\cdot\rangle$ function. If $x$
is so chosen to lie in a finite extension $K$ of $\mathbb{Q}_{p}$
whose ramification index over $\mathbb{Q}_{p}$ is less than $p-1$,
then $a_{0}(s,x)$ is analytic for $|s-1|_{p} <
 |\pi|_{p}^{-1}p^{-1/(p-1)}$, except for a simple pole at $s=0$. If
 $s\in\mathbb{Z}_{p}\backslash\{0\}$, then
 $a_{0}(s,x)$ is locally analytic as a function of $x$ on
 $\mathbb{C}_{p}\backslash\mathbb{Z}_{p}$.
\end{corollary}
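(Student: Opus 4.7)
The plan is to reduce the corollary to the preceding theorem of Tangedal--Young by the affine change of variables $s \mapsto 1-s$. Since $a_0(s,x) = \tfrac{1}{2}\zeta_p(1-s,x)$ by definition (\ref{a0sx}), every claim about $a_0(\cdot,x)$ as a function of $s$ translates directly into a claim about $\zeta_p(\cdot,x)$ under this substitution, and we only need to check that the substitution correctly relocates the relevant sets.

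Concretely, I would proceed as follows. The map $\varphi\colon s \mapsto 1-s$ is an affine isometry of $\mathbb{C}_p$, so in particular it is $C^{\infty}$ (indeed analytic) on $\mathbb{Z}_p$, preserves $p$-adic distances, and exchanges the points $0$ and $1$. Hence $\varphi$ maps $\mathbb{Z}_p \setminus \{0\}$ bijectively onto $\mathbb{Z}_p \setminus \{1\}$, and the Tangedal--Young theorem applied to $\zeta_p(\varphi(s),x)$ gives the $C^{\infty}$ assertion in $s$ on $\mathbb{Z}_p\setminus\{0\}$ together with the local analyticity in $x$ on $\mathbb{C}_p\setminus\mathbb{Z}_p$ for each fixed $s\in\mathbb{Z}_p\setminus\{0\}$. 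For the disc statement, note that
\begin{equation*}
|\varphi(s)|_p = |1-s|_p = |s-1|_p,
\end{equation*}
so the condition $|\varphi(s)|_p < p^{-1/(p-1)}|\log_p\langle x\rangle|_p^{-1}$ becomes $|s-1|_p < p^{-1/(p-1)}|\log_p\langle x\rangle|_p^{-1}$, and analyticity of $\zeta_p(\cdot,x)$ on the former disc, together with analyticity of $\varphi$, yields the analyticity of $a_0(\cdot,x)$ on the latter disc, with the same local analyticity in $x$ and independence of the choices made in defining $\langle\cdot\rangle$. The same argument handles the case $x\in K$ with $e(K/\mathbb{Q}_p)<p-1$, replacing the radius by $|\pi|_p^{-1}p^{-1/(p-1)}$; the unique simple pole of $\zeta_p(s,x)$ at $s=1$ pulls back under $\varphi$ to a unique simple pole of $a_0(s,x)$ at $s=0$, since $\varphi(0)=1$.

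There is no substantive obstacle: the only thing to be careful about is the correct translation of the pole location and of the two disc conditions, both of which follow immediately from $|1-s|_p=|s-1|_p$ and $\varphi(0)=1$. The factor $\tfrac{1}{2}$ in the definition of $a_0(s,x)$ plays no role in any of the analytic properties claimed. Thus the corollary follows at once from the cited theorem.
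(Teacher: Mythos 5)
Your proposal is correct and matches the paper's (implicit) argument: the paper states the corollary as an immediate consequence of the Tangedal--Young theorem via the definition $a_{0}(s,x)=\tfrac{1}{2}\zeta_{p}(1-s,x)$, exactly the substitution $s\mapsto 1-s$ you carry out, with the same translation of the disc conditions and of the pole from $s=1$ to $s=0$.
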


 Recall $B_{1}=\{s\in\mathbb{C}_{p}~|~|s|_{p} < R_{p}=p^{(p-2)/(p-1)}\},$
$B_{2}$ the set of even Dirichlet characters on $\mathbb{Z}_{p}$ and $B_{3}=\mathbb{Z}_{p}$ as in Section~\ref{introduction}. For  $(s,\chi,x)\in B_{1}\times B_{2}\times B_{3}$,  we can extend the definition  of $p$-adic functions
 $a_{0}(s,\chi)$ to three variables by setting
\begin{equation}\label{de1} a_{0}(s,\chi,x)=\frac{1}{2}\zeta_{p}(1-s,\chi,x),\end{equation}
 where the $p$-adic Hurwitz zeta function $\zeta_{p}(s,\chi,x)$ was defined by Eq.~(\ref{pz2}).

The following result indicates that the above definition is indeed
an extension of the original definition set by Serre.

\begin{proposition}\label{s1} Let $A_{1}=\mathbb{Z}_{p}$ and $A_{2}=\{\omega^u~|~
u~\textrm{is even}\}$. For $(s,u)\in X=A_{1}\times A_{2}$, we have
$a_{0}(s,\omega^{u},0)=a_{0}(G_{s,u}^{*}).$
\end{proposition}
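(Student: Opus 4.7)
The plan is to reduce the claimed equality to a direct identification of the three ingredients already assembled in the paper: the definition of $a_0(s,\chi,x)$ in Eq.~\eqref{de1}, Cohen's identity $L_p(s,\chi)=\zeta_p(s,\chi,0)$ cited from \cite[Prop.~11.2.20(3)]{Co2} just after Definition~\ref{p-E-zeta2}, and Serre's formula $a_0(G^*_{s,u})=\tfrac12 L_p(1-s,\omega^u)$ from Eq.~\eqref{eq1}. The point is simply that setting $x=0$ and $\chi=\omega^u$ in the three-variable object $\zeta_p(s,\chi,x)$ collapses it to Iwasawa's $p$-adic $L$-function, which in turn matches the Serre constant term.

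More precisely, I would proceed in three short steps. First, substitute $x=0$ and $\chi=\omega^u$ directly into Eq.~\eqref{de1} to obtain
\[
a_0(s,\omega^u,0)=\tfrac12\,\zeta_p(1-s,\omega^u,0).
\]
Second, apply Cohen's identity with $s$ replaced by $1-s$ and $\chi=\omega^u$ to rewrite this as $\tfrac12 L_p(1-s,\omega^u)$. Third, compare with the first line of Eq.~\eqref{eq1}, which defines $a_0(G^*_{s,u})$ precisely as $\tfrac12 L_p(1-s,\omega^u)$. Concatenating the three equalities yields the proposition.

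The only verification that is not purely formal is to check that the hypotheses under which each cited statement is valid are met by the parameters $(s,u)\in A_1\times A_2$ appearing here. For Eq.~\eqref{de1} we need $(s,\omega^u,0)\in B_1\times B_2\times B_3$: since $A_1=\mathbb{Z}_p$ and $R_p=p^{(p-2)/(p-1)}>1$, every $s\in A_1$ lies in $B_1$; since $u$ is even, $\omega^u$ is an even Dirichlet character modulo $p$, hence belongs to $B_2$; and $0\in B_3=\mathbb{Z}_p$. For Cohen's identity $L_p(s,\chi)=\zeta_p(s,\chi,0)$ the same membership $s\in B_1$ ensures applicability. No delicate estimate is required.

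Consequently there is no genuine obstacle; the statement is in essence a bookkeeping verification that the three-variable $a_0(s,\chi,x)$ built from Cohen's Hurwitz function specializes at $(s,\omega^u,0)$ to Serre's original constant term. The only point that merits being made explicit in the write-up is the pole/regularity check at $s=0$ (corresponding to the pole of $\zeta_p(1-s,\chi,0)$ at $s=0$ when $\chi$ is trivial): for nontrivial $\omega^u$ no pole occurs, and for $u\equiv 0\pmod{p-1}$ the equality is to be read in the sense of meromorphic functions, matching the convention used for $L_p(s,\mathbf{1})$ in Serre's framework.
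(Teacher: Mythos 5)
Your proposal is correct and follows exactly the chain used in the paper's own proof: substitute $(s,\omega^u,0)$ into Eq.~(\ref{de1}), apply Cohen's identity $L_p(s,\chi)=\zeta_p(s,\chi,0)$, and compare with Eq.~(\ref{eq1}). The added checks that $(s,\omega^u,0)\in B_1\times B_2\times B_3$ and the remark about the $s=0$ pole are sensible but do not change the argument, which is the same as the paper's.
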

\begin{proof} By Eq.~(\ref{de1}), \cite[Proposition 11.2.20 (3)]{Co2}, Eq.~(\ref{eq1}), we have
\begin{equation}\begin{aligned}a_{0}(s,\omega^{u},0)&=\frac{1}{2}\zeta_{p}(1-s,\omega^{u},0)\\&=\frac{1}{2}L_{p}(1-s,\omega^{u})\\&=a_{0}(G_{s,u}^{*}).\end{aligned}\end{equation}
\end{proof}

In conclusion, we have extended the parameter space of $a_{0}(s,u)$
by the following graph (compare with Graph~(\ref{graph}) above):

\begin{equation}\label{graph2}\begin{aligned}a_{0}(s,u),&~~ (s,u)\in
X\longrightarrow a_{0}(s,x),~~ (s,x)\in
C_{1}\times C_{2}\\&\downarrow\\  a_{0}(s,\omega^{u}),&~~ (s,\omega^{u})\in A_{1}\times A_{2}\\&\downarrow\\ a_{0}(s,\chi),&~~ (s,\chi)\in B_{1}\times B_{2}\\&\downarrow\\
a_{0}(s,\chi,x),&~~ (s,\chi,x)\in B_{1}\times B_{2}\times B_{3}
\end{aligned}\end{equation}

\section{$a_{n}(s,u)$}~\label{an}
Throughout this section, we assume that $n \geq 1$.

Let $U$ be any open subset of $\mathbb{Z}_{p}$. The $p$-adic
$\delta$-measure is defined as follows:
\begin{equation}
{\delta_{d}}(U)=\begin{cases} 1, &\textrm{if}~~~~ d\in U\\0,
&\textrm{if}~~~~ d\not\in U.\end{cases}\end{equation}

Let $\mu_{n}=\sum_{\substack{d|n\\
(p,d)=1}} d^{-1}\delta_{d}$.

Following
 Panchishkin~\cite[p. 2361]{Pan}, we have the following $p$-adic integration
 representation of $a_{n}(s,u)$.
\begin{equation}\label{inte}\begin{aligned}
a_{n}(s,u)&=\sum_{\substack{d|n\\
(p,d)=1}} d^{-1}\omega(d)^{u}\langle d\rangle^{s}\\&=\sum_{\substack{d|n\\
(p,d)=1}}d^{-1}\int_{\mathbb{Z}_{p}}\omega(a)^{u}\langle a
\rangle^{s}
d\delta_{d}(a)\\&=\int_{\mathbb{Z}_{p}}\omega(a)^{u}\langle a
\rangle^{s}d\mu_{n}
\end{aligned}\end{equation}
(also see Katz's discussions on this measure \cite[p. 496]{katz}).

In this section, by using $p$-adic integral
representation of $a_{n}(s,u)$~(see Eq. (\ref{inte})),
we extend the parameter space of $a_{n}(s,u)$ as in Graph (\ref{graph2}).

\begin{definition}\label{ansx}
For $x\in \mathbb{C}_{p}\backslash \mathbb{Z}_{p}$, we define the
  $a_{n}(s,x)$ by the following
formula
\begin{equation}\begin{aligned}\label{ansxe}a_{n}(s,x)&=\int_{\Z}\omega_{v}(x+a)\langle x+a\rangle^{s}d\mu_{n}(a)\\&=\int_{\Z}(x+a)\langle x+a\rangle^{s-1}d\mu_{n}(a).\end{aligned}\end{equation}
\end{definition}

By Proposition~\ref{analytic}, we have the following result.
\begin{theorem}~\label{an1}
For any  choice of $x\in\mathbb{C}_{p}\backslash \mathbb{Z}_{p}$
the function $a_{n}(s,x)$ is a $C^{\infty}$ function of $s$ on
$\mathbb{Z}_{p}$, and is an analytic function of
$s$ on the disc $|s-1|_{p} < p^{-1/(p-1)}|\rm{log}$$_{p}\langle x\rangle|_{p}^{-1}$; on this disc it is locally analytic
as a function of $x$ and independent of the choice made to define
the $\langle\cdot\rangle$ function. If $x$ is so chosen to lie in
a finite extension $K$ of $\mathbb{Q}_{p}$ whose ramification
index over $\mathbb{Q}_{p}$ is less than $p-1$, then $a_{n}(s,x)$
is analytic for $|s-1|_{p} <
 |\pi|_{p}^{-1}p^{-1/(p-1)}$. If $s\in\mathbb{Z}_{p}$, then the function
 $a_{n}(s,x)$ is locally analytic as a function of $x$ on
 $\mathbb{C}_{p}\backslash\mathbb{Z}_{p}$.
\end{theorem}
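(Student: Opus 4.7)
The first move is to unfold the measure $\mu_n = \sum_{d \mid n,\, (p,d)=1} d^{-1}\delta_d$, which is a \emph{finite} linear combination of point masses at divisors of $n$ coprime to $p$. Definition~\ref{ansx} then becomes the finite sum
\[
a_n(s,x) \;=\; \sum_{\substack{d \mid n \\ (p,d)=1}} d^{-1} \omega_v(x+d) \langle x+d \rangle^s \;=\; \sum_{\substack{d \mid n \\ (p,d)=1}} d^{-1}(x+d)\langle x+d \rangle^{s-1}.
\]
Each summand is well-defined on $\mathbb{C}_p \setminus \mathbb{Z}_p$: for $x \notin \mathbb{Z}_p$ and $d$ a positive integer coprime to $p$ we have $x+d \in \mathbb{C}_p^{\times}$, since $x+d = 0$ would force $x = -d \in \mathbb{Z}_p$.

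Once this finite-sum structure is exposed, the argument is a direct transfer of Proposition~\ref{analytic} to each term. Finite sums preserve the classes ``$C^\infty$ in $s$'', ``analytic in $s$ on a given disc'', and ``locally analytic in $x$'', so each of the four assertions in Theorem~\ref{an1} reduces to the corresponding statement for $s \mapsto \langle x+d \rangle^{s-1}$, multiplied by the $s$-independent prefactor $d^{-1}(x+d)$. Applied at the point $x+d$, Proposition~\ref{analytic} yields $C^\infty$-ness on $\mathbb{Z}_p$, analyticity on the disc $|s-1|_p < p^{-1/(p-1)}|\log_p \langle x+d \rangle|_p^{-1}$, the sharper bound $|\pi|_p^{-1} p^{-1/(p-1)}$ under the ramification-index hypothesis, and local analyticity in $x$ for $s \in \mathbb{Z}_p$ on any sufficiently small disc around $x$ (namely one on which $y \mapsto y+d$ lands inside $\{z : |z-(x+d)|_p < |x+d|_p\}$). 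Summing over the finitely many $d$ then yields all four conclusions.

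The main technical step, and the one I expect to be the most delicate, is to reconcile the disc $|s-1|_p < p^{-1/(p-1)}|\log_p \langle x \rangle|_p^{-1}$ appearing in the statement with the $d$-dependent discs produced by Proposition~\ref{analytic}. This amounts to showing $|\log_p \langle x+d \rangle|_p \le |\log_p \langle x \rangle|_p$ for each admissible $d$. When $v_p(x) < 0$ one has $\hat{x+d} = \hat{x}$ and $v_p(x+d) = v_p(x)$, so a direct computation gives the clean identity $\langle x+d \rangle = \langle x \rangle(1 + d/x)$ with $|d/x|_p = p^{v_p(x)} < 1$; the required inequality then follows from the ultrametric estimate $|\log_p(1+y)|_p \le |y|_p$ for $|y|_p < 1$. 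The complementary regime $v_p(x) \ge 0$ with $x \notin \mathbb{Z}_p$ demands a parallel but more careful comparison of Teichm\"uller representatives of $x$ and $x+d$, which is where the bulk of the bookkeeping will lie. With these comparisons in hand the four assertions of the theorem follow immediately from the finite-sum reduction above.
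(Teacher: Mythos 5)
Your reduction of $a_n(s,x)$ to the finite sum $\sum_{d\mid n,\,(p,d)=1}d^{-1}\omega_v(x+d)\langle x+d\rangle^{s}$ followed by a termwise appeal to Proposition~\ref{analytic} is in substance the paper's own route: the paper's proof of this theorem is the bare citation of Proposition~\ref{analytic}, and the finite-sum identity is exactly its Proposition~\ref{delta1} (obtained there by a continuity argument rather than by unfolding the point masses, to the same effect). For three of the four assertions --- $C^\infty$ dependence on $s\in\mathbb{Z}_p$, analyticity for $|s-1|_p<|\pi|_p^{-1}p^{-1/(p-1)}$ when $x$ (hence every $x+d$) lies in a finite extension $K$ with ramification index less than $p-1$, and local analyticity in $x$ for $s\in\mathbb{Z}_p$ --- the termwise transfer plus finite summation is complete and correct.

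The genuine gap is precisely the step you flag, and the bridge you propose does not work. Termwise you only get analyticity of $s\mapsto\langle x+d\rangle^{s-1}$ on $|s-1|_p<p^{-1/(p-1)}|\log_p\langle x+d\rangle|_p^{-1}$, so you need $|\log_p\langle x+d\rangle|_p\le|\log_p\langle x\rangle|_p$ for every admissible $d$. Your identity $\langle x+d\rangle=\langle x\rangle(1+d/x)$ for $v_p(x)<0$ is correct, but (i) the estimate $|\log_p(1+y)|_p\le|y|_p$ is valid only for $|y|_p<p^{-1/(p-1)}$, not for all $|y|_p<1$ (the term $y^p/p$ can dominate), so you would already need $|x|_p>p^{1/(p-1)}$; and (ii) even where it is valid it only gives $|\log_p\langle x+d\rangle|_p\le\max\bigl(|\log_p\langle x\rangle|_p,\,|d/x|_p\bigr)$, which is not the inequality you need. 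In fact the needed inequality is false in general: for $x=p^{-1}$ one has $\hat{x}=1$, $\langle x\rangle=1$, hence $\log_p\langle x\rangle=0$, while $\langle x+d\rangle=1+dp$ and $|\log_p\langle x+d\rangle|_p=p^{-1}$. So no amount of further bookkeeping, in either regime of $v_p(x)$, can yield the disc stated with $\langle x\rangle$; what your argument actually proves is analyticity on $|s-1|_p<p^{-1/(p-1)}\bigl(\max_{d\mid n,\,(p,d)=1}|\log_p\langle x+d\rangle|_p\bigr)^{-1}$. (The same point is passed over in the paper itself: for $x=p^{-1}$ and $n=1$, $a_1(s,x)=p^{-1}(1+p)^{s}$ is analytic only on a bounded disc in $s$, whereas $\log_p\langle x\rangle=0$ would make the stated disc unbounded.) To close your proof you must either impose a hypothesis guaranteeing $|\log_p\langle x+d\rangle|_p\le|\log_p\langle x\rangle|_p$ for all such $d$, or restate the disc of analyticity in terms of $\max_d|\log_p\langle x+d\rangle|_p$.
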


\begin{proposition}~\label{delta1} For any area of $(s,x)\in C_{1}\times C_{2}$ indicated in the above Theorem which $a_{n}(s,x)$ is continuous,
we have $$a_{n}(s,x)= \sum_{\substack{d|n\\
(p,d)=1}}d^{-1} \omega_{v}(x+d)\langle x+d\rangle^{s}.$$
\end{proposition}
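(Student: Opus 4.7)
The proof plan is to observe that this proposition is essentially a direct unwinding of the definition of $a_n(s,x)$ once we recall what the measure $\mu_n$ is. By Definition~\ref{ansx},
\begin{equation*}
a_n(s,x) = \int_{\mathbb{Z}_p} \omega_v(x+a)\langle x+a\rangle^s\, d\mu_n(a),
\end{equation*}
and by the construction at the start of Section~\ref{an}, the measure $\mu_n$ is the \emph{finite} linear combination $\mu_n = \sum_{d \mid n,\, (p,d)=1} d^{-1}\delta_d$ of point masses. First I would invoke linearity of the integral in the measure to write
\begin{equation*}
a_n(s,x) = \sum_{\substack{d\mid n\\ (p,d)=1}} d^{-1} \int_{\mathbb{Z}_p} \omega_v(x+a)\langle x+a\rangle^s\, d\delta_d(a).
\end{equation*}

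Next I would use the defining property of the Dirac measure: for any function $f$ that is continuous (indeed, just well defined) at $d$, $\int_{\mathbb{Z}_p} f(a)\, d\delta_d(a) = f(d)$. Applying this with $f(a) = \omega_v(x+a)\langle x+a\rangle^s$ yields the stated identity. The only thing to verify is that the integrand is meaningful at each of the finitely many points $d$ in the support of $\mu_n$: since $x \in \mathbb{C}_p \setminus \mathbb{Z}_p$ and $d \in \mathbb{Z}_p$, we have $x+d \in \mathbb{C}_p \setminus \mathbb{Z}_p \subset \mathbb{C}_p^\times$, so both $\omega_v(x+d)$ and $\langle x+d\rangle$ are defined; and by Proposition~\ref{analytic}, $s \mapsto \langle x+d\rangle^s$ is at least a $C^\infty$ function on $\mathbb{Z}_p$ (analytic on the disc of Theorem~\ref{an1}), which is precisely the role played by the continuity hypothesis in the statement.

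There is really no main obstacle here: the content of the proposition is simply that the $p$-adic integral against a finite sum of Dirac measures reduces to a finite weighted sum of point evaluations. The continuity assumption in the hypothesis serves only to guarantee that we are in a regime where Definition~\ref{ansx} produces a bona fide integral and the point evaluations make sense; the algebraic identity itself is immediate from linearity and the definition of $\delta_d$.
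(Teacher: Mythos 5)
Your proposal is correct and follows essentially the same route as the paper: both unwind $\mu_n=\sum_{d\mid n,\,(p,d)=1}d^{-1}\delta_d$ by linearity and evaluate each $\delta_d$-integral at the point $d$, with the paper merely spelling out via an $\epsilon$-argument (using the continuity from Proposition~\ref{analytic} and Theorem~\ref{an1}) why $\int_{\mathbb{Z}_p} f\,d\delta_d=f(d)$, which you invoke as the defining property of the Dirac mass.
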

\begin{proof} Let $d$ be a positive integer such that $d~|~n$ and $(d,p)=1$.
For any area indicated in the above Theorem, by Proposition~\ref{analytic}, the functions $\omega_{v}(x)$ and
$\langle x\rangle^{s}$ are  continuous, and by Theorem~\ref{an1}, the function
$a_{n}(s,x)$ is also continuous, thus from the definition of continuous functions, for any $\epsilon
> 0$, there exists a neighborhood $U_{d}$ of $d$, such that
$$|\omega_{v}(a)\langle a\rangle^{s}-\omega_{v}(d)\langle
d\rangle^{s}| < \epsilon,$$ for any $a\in U_{d}$, that is,
$$|\omega_{v}(a)\langle a\rangle^{s}\delta_{d}(a)-\omega_{v}(d)\langle
d\rangle^{s}| < \epsilon,$$ for any $a\in U_{d}$.

This means $$\int_{\mathbb{Z}_{p}}\omega_{v}(a)\langle
a\rangle^{s}d\delta_{d}(a)=\omega_{v}(d)\langle
d\rangle^{s},$$ and
$$\int_{\mathbb{Z}_{p}}\omega_{v}(a)\langle
a\rangle^{s}d\mu_{n}(a)=\sum_{\substack{d|n\\
(p,d)=1}}d^{-1} \omega_{v}(d)\langle d\rangle^{s}.$$ By
Eq.~(\ref{ansxe}), we have our result.
\end{proof}

\begin{definition}\label{de2}
Let $\chi$ be a character modulo $p^{v}$ with $v\geq 1$. If
$x\in\mathbb{Z}_{p}$ and $s\in\mathbb{C}_{p}$ such that
$|s|_{p}<R_{p}=p^{(p-2)/(p-1)}$,  then we define
$$a_{n}(s,\chi,x)=\int_{\mathbb{Z}_{p}}\chi(x+a)\langle x+a \rangle^{s}d\mu_{n}(a).$$
\end{definition}

Since $\chi(x)$ is a continuous function on $\mathbb{Z}_{p}$ and
the function $\langle x \rangle^{s}$ is continuous on
$\mathbb{Z}_{p}$ for $|s|_{p}<R_{p}=p^{(p-2)/(p-1)}$, following
the same reasoning as in Proposition~\ref{delta1}, we have the
following result.

\begin{proposition}~\label{delta2} For $(s,\chi,x)\in
B_{1}\times B_{2}\times B_{3}$,
we have $$a_{n}(s,\chi,x)= \sum_{\substack{d|n\\
(p,d)=1}} d^{-1}\chi(x+d)\langle x+d\rangle^{s}.$$
\end{proposition}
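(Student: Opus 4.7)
The plan is to mirror the argument used for Proposition~\ref{delta1}, but on the parameter region $B_{1}\times B_{2}\times B_{3}$ with the continuous character $\chi$ replacing the projection $\omega_{v}$. Starting from Definition~\ref{de2}, I would write
\begin{equation*}
a_{n}(s,\chi,x)=\int_{\mathbb{Z}_{p}}\chi(x+a)\langle x+a\rangle^{s}d\mu_{n}(a)
\end{equation*}
and unpack the measure $\mu_{n}=\sum_{d\mid n,\,(p,d)=1} d^{-1}\delta_{d}$. By linearity in the measure, the proposition reduces to the single identity
\begin{equation*}
\int_{\mathbb{Z}_{p}}\chi(x+a)\langle x+a\rangle^{s}\,d\delta_{d}(a)=\chi(x+d)\langle x+d\rangle^{s}
\end{equation*}
for every divisor $d$ of $n$ with $(p,d)=1$, after which summing with the weights $d^{-1}$ yields the claimed closed form.

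The remaining point is therefore to justify that integration against the Dirac measure $\delta_{d}$ evaluates a continuous integrand at $d$, exactly as in the proof of Proposition~\ref{delta1}. For this I would check the two required continuity statements on $\mathbb{Z}_{p}$: the Dirichlet character $\chi$ modulo $p^{v}$ extends continuously to $\mathbb{Z}_{p}$ by the construction recalled in the introduction (the sequence $\chi(a_{n})$ is ultimately constant on any $p$-adically convergent sequence $a_{n}\to a$), and for $(s,x)\in B_{1}\times B_{3}$ the function $a\mapsto\langle x+a\rangle^{s}$ is continuous (in fact $C^{\infty}$) on $\mathbb{Z}_{p}$ by Proposition~\ref{analytic}, since $|s|_{p}<R_{p}=p^{(p-2)/(p-1)}$ falls inside the disc of convergence. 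Hence given $\varepsilon>0$ one may choose a neighborhood $U_{d}$ of $d$ on which the product $\chi(x+a)\langle x+a\rangle^{s}$ differs from $\chi(x+d)\langle x+d\rangle^{s}$ by at most $\varepsilon$, and the $\epsilon$–estimate of Proposition~\ref{delta1} applies verbatim.

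The main (and essentially only) obstacle is the continuity check for $a\mapsto\langle x+a\rangle^{s}$ on the relevant parameter range, but this is already handed to us by Proposition~\ref{analytic}; the rest is linearity of the integral and the definition of $\mu_{n}$. Thus the proof is a short transcription of the argument for Proposition~\ref{delta1}, swapping $\omega_{v}$ for $\chi$ and swapping the convergence region $|s-1|_{p}<p^{-1/(p-1)}|\log_{p}\langle x\rangle|_{p}^{-1}$ for the disc $|s|_{p}<R_{p}$ under which $\langle x+a\rangle^{s}$ remains a continuous function of $a\in\mathbb{Z}_{p}$.
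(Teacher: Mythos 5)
Your proposal is correct and matches the paper's own treatment: the paper likewise invokes the continuity of $\chi$ and of $\langle\cdot\rangle^{s}$ on $\mathbb{Z}_{p}$ for $|s|_{p}<R_{p}$ and then simply repeats the $\delta_{d}$-evaluation argument of Proposition~\ref{delta1}, summing over divisors $d\mid n$ with $(p,d)=1$ weighted by $d^{-1}$. Your write-up is in fact slightly more explicit than the paper, which states the continuity facts and says ``following the same reasoning as in Proposition~\ref{delta1}'' without spelling out the $\epsilon$-estimate.
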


\begin{proposition}~\label{s2} Let $A_{1}=\mathbb{Z}_{p}$ and $A_{2}=\{\omega^{u}~|~
u~\textrm{is even}\}$. For $(s,u)\in X=A_{1}\times A_{2}$, we have
$a_{n}(s,\omega^{u},0)=a_{n}(G_{s,u}^{*}).$
\end{proposition}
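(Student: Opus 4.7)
The plan is to observe that Proposition \ref{s2} is a direct consequence of Proposition \ref{delta2} specialized to the triple $(s, \omega^u, 0)$, followed by comparison with the Serre formula in Eq.~(\ref{eq1}). First I would verify that $(s, \omega^u, 0)$ lies in the domain $B_1 \times B_2 \times B_3$ on which Proposition \ref{delta2} is valid. Since $p$ is odd, we have $R_p = p^{(p-2)/(p-1)} > 1$, so any $s \in A_1 = \mathbb{Z}_p$ satisfies $|s|_p \leq 1 < R_p$ and thus lies in $B_1$. Because $u$ is even, $\omega^u$ is an even Dirichlet character on $\mathbb{Z}_p$, so $\omega^u \in B_2$; and $0 \in \mathbb{Z}_p = B_3$ trivially.

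Next, I would apply Proposition \ref{delta2} with $\chi = \omega^u$ and $x = 0$ to convert the integral in Definition \ref{de2} into the finite sum
\[
a_n(s, \omega^u, 0) = \sum_{\substack{d|n \\ (p,d)=1}} d^{-1} \, \omega^u(d) \, \langle d \rangle^s.
\]
This is precisely the expression for $a_n(G^*_{s,u})$ recorded in Eq.~(\ref{eq1}) (using $\omega^u(d) = \omega(d)^u$), so the desired identity follows.

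No substantive obstacle is expected: the proposition is a bookkeeping compatibility statement showing that the extended coefficients $a_n(s, \chi, x)$ restrict, on the original Serre parameter space $X = A_1 \times A_2$ with $x = 0$, to the classical Fourier coefficients $a_n(G^*_{s,u})$. The argument mirrors exactly the one used for $a_0$ in Proposition \ref{s1}, and together they justify the downward arrows in the extended graph (\ref{graph2}).
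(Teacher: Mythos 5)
Your proposal is correct and follows essentially the same route as the paper: apply Proposition~\ref{delta2} with $\chi=\omega^{u}$, $x=0$ and compare the resulting sum $\sum_{d\mid n,\,(p,d)=1}d^{-1}\omega(d)^{u}\langle d\rangle^{s}$ with Eq.~(\ref{eq1}). The only difference is your explicit check that $(s,\omega^{u},0)\in B_{1}\times B_{2}\times B_{3}$, which the paper leaves implicit.
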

\begin{proof} By Proposition~\ref{delta2}, Eq.~(\ref{eq1}), we have
\begin{equation}\begin{aligned}a_{n}(s,\omega^{u},0)&=\sum_{\substack{d|n\\
(p,d)=1}} d^{-1}\omega(d)^{u}\langle d\rangle^{s}
\\&=a_{n}(G_{s,u}^{*}).\end{aligned}\end{equation}
\end{proof}

In conclusion, we have extended the parameter space of $a_{n}(s,u)$
by the following graph (compare with Graph~(\ref{graph2}) above):

\begin{equation}\label{graph3}\begin{aligned}a_{n}(s,u),&~~ (s,u)\in
X\longrightarrow a_{n}(s,x),~~ (s,x)\in
C_{1}\times C_{2}\\&\downarrow\\  a_{n}(s,\omega^{u}),&~~ (s,\omega^{u})\in A_{1}\times A_{2}\\&\downarrow\\ a_{n}(s,\chi),&~~ (s,\chi)\in B_{1}\times B_{2}\\&\downarrow\\
a_{n}(s,\chi,x),&~~ (s,\chi,x)\in B_{1}\times B_{2}\times B_{3}
\end{aligned}\end{equation}

Notice that for $(s,u)\in X=\mathbb{Z}_{p}\times\mathbb{Z}/(p-1)\mathbb{Z}$ with $u$ even,  $G^{*}_{s,u}$ is Serre's $p$-adic family of
Eisenstein series
$$G^{*}_{s,u}=a_{0}(s,u)+\sum_{n=1}^{\infty}a_{n}(s,u)q^{n}.$$
Thus we can extend the parameter space of $G^{*}_{s,u}$ as
Graph~(\ref{graph2}) by extending the Fourier coefficients $a_{n}(s,u),n\geq 0$.

\begin{definition}~\label{df1}  For any area of $(s,x)\in C_{1}\times C_{2}$ indicated in Theorem~\ref{an1} above, we define
$$G^{*}(s,x)=a_{0}(s,x)+\sum_{n=1}^{\infty}a_{n}(s,x)q^{n},$$ where $a_{0}(s,x)$ and $a_{n}(s,x),n\geq 1,$ are defined in Eq. (\ref{a0sx}) and Definition \ref{ansx}, respectively.
\end{definition}

\begin{definition}~\label{df2}  For  $(s,\chi,x)\in B_{1}\times B_{2}\times B_{3}$, we define
$$G^{*}(s,\chi,x)=a_{0}(s,\chi, x)+\sum_{n=1}^{\infty}a_{n}(s,\chi, x)q^{n},$$ where $a_{0}(s,\chi, x)$ and $a_{n}(s,\chi,x),n\geq 1,$ are defined in Eq. (\ref{de1}) and Definition \ref{de2}, respectively.
\end{definition}

From Propositions \ref{s1} and \ref{s2} above, we have the following result.

\begin{proposition}\label{sg} Let $A_{1}=\mathbb{Z}_{p}$ and $A_{2}=\{\omega^u~|~
u~\textrm{is even}\}$. For $(s,u)\in X=A_{1}\times A_{2}$, we have
$G^{*}(s,\omega^{u},0)=G^{*}_{s,u},$ where $G^{*}_{s,u}$ is the $p$-adic family of Eisenstein
series \`{a} la Serre.
\end{proposition}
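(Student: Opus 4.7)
The plan is to prove the identity by matching $q$-expansion coefficients on both sides, using Propositions \ref{s1} and \ref{s2} as the key inputs. Since both $G^{*}(s,\omega^{u},0)$ and $G^{*}_{s,u}$ are formal $q$-series, it suffices to show that the constant term and each Fourier coefficient agree after the specialization $(s,\chi,x) \mapsto (s,\omega^{u},0)$.

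First I would unfold Definition \ref{df2} at the point $(s,\omega^{u},0) \in B_{1}\times B_{2}\times B_{3}$ (this is legitimate because $\omega^{u}$ with $u$ even lies in $B_{2}$, and $0 \in B_{3} = \mathbb{Z}_{p}$), yielding
\begin{equation*}
G^{*}(s,\omega^{u},0) = a_{0}(s,\omega^{u},0) + \sum_{n=1}^{\infty} a_{n}(s,\omega^{u},0)\, q^{n}.
\end{equation*}
On the other side, Serre's $p$-adic family of Eisenstein series has the $q$-expansion $G^{*}_{s,u} = a_{0}(s,u) + \sum_{n=1}^{\infty} a_{n}(s,u)\, q^{n}$ with $a_{n}(s,u) = a_{n}(G^{*}_{s,u})$, as recalled in the introduction following equation~(\ref{eq1}).

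Next I would identify the coefficients one by one. For the constant term, Proposition \ref{s1} gives directly $a_{0}(s,\omega^{u},0) = a_{0}(G^{*}_{s,u}) = a_{0}(s,u)$. For $n \geq 1$, Proposition \ref{s2} gives $a_{n}(s,\omega^{u},0) = a_{n}(G^{*}_{s,u}) = a_{n}(s,u)$. Substituting these equalities coefficient-by-coefficient into the $q$-expansion of $G^{*}(s,\omega^{u},0)$ produces exactly the $q$-expansion of $G^{*}_{s,u}$, completing the proof.

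This argument is essentially a bookkeeping corollary: all the analytic substance is already packaged into Propositions \ref{s1} and \ref{s2}, and the graphs~(\ref{graph2}) and~(\ref{graph3}) were designed precisely so that this specialization makes sense. Consequently I do not anticipate any real obstacle; the only point to check carefully is the domain compatibility, namely that the pair $(s,\omega^{u},0)$ with $s \in A_{1} = \mathbb{Z}_{p}$ indeed lies in the common domain $B_{1}\times B_{2}\times B_{3}$ on which $G^{*}(s,\chi,x)$ is defined, which follows from the inclusion $A_{1} = \mathbb{Z}_{p} \subset B_{1} = \{s \in \mathbb{C}_{p} : |s|_{p} < R_{p}\}$ since $|s|_{p} \leq 1 < R_{p}$ for $s \in \mathbb{Z}_{p}$.
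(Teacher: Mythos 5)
Your proof is correct and follows exactly the paper's route: the paper likewise obtains the identity by combining Propositions \ref{s1} and \ref{s2} to match the constant term and each higher Fourier coefficient of Definition \ref{df2} specialized at $(s,\omega^{u},0)$ with those of Serre's $G^{*}_{s,u}$. Your extra check that $(s,\omega^{u},0)$ lies in $B_{1}\times B_{2}\times B_{3}$ is a harmless (and welcome) bit of bookkeeping the paper leaves implicit.
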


\begin{remark}\label{remark2} Our methods for extending $a_{n}(s,u),n\geq 0$, are based on Cohen's~\cite[Chapter 11]{Co2} and  Tangedal-Young's~\cite{TP} constructions of $p$-adic Hurwitz zeta functions. From Remark \ref{remark}, we may call above extensions of $p$-adic family of Eisenstein series $G^{*}_{s,u}$ (see Definitions~\ref{df1} and \ref{df2}) the $p$-adic analogue of Weil's elliptic functions (\ref{weil}).\end{remark}

\section{Power series expansions}

In this section, we show that the power series expansion  of Weil's elliptic functions (\ref{weil2}) also exists in the $p$-adic case.

 First we recall a result on the expansion of $p$-adic Hurwitz zeta functions by Tangedal and Young~\cite{TP} which leads to an expansion of the constant term
 $a_{0}(s,x)$ for $x\in\mathbb{C}_{p}^{*}$ and $|x|_{p}>1$.

 \begin{theorem}\label{TP1}Suppose $x\in\mathbb{C}_{p}^{*}$ and $|x|_{p}>1$. Then there is  an identity of analytic functions:
 \begin{equation}\label{G1} a_{0}(s,x)=\frac{1}{2}\zeta_{p}(1-s,x)=x\langle x \rangle^{s-1}\sum_{j=0}^{\infty}\binom{s}{j}\left(-\frac{B_{j}}{2s}\right)x^{-j}
 \end{equation}
 on the disc $|s-1|_{p} < p^{-1/(p-1)}|\rm{log}$$_{p}\langle x\rangle|_{p}^{-1}$. If in addition $x$ is chosen to lie in a finite extension $K$ of $\mathbb{Q}_{p}$ whose ramification index
 over $\mathbb{Q}_{p}$ is less than $p-1$, then this formula is valid for $s\in\mathbb{C}_{p}\backslash \{0\}$ such that $|s-1|_{p} < |\pi|_{p}^{-1}p^{-1/(p-1)}$,
 where $(\pi)$ is the maximal ideal of the ring of integers $O_{K}$ of $K$.\end{theorem}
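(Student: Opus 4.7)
The plan is to work directly from Tangedal and Young's integral representation (Definition~\ref{p-E-zeta}), namely
\begin{equation*}
\zeta_{p}(1-s,x) = -\frac{1}{s}\int_{\mathbb{Z}_{p}}\omega_{v}(x+a)\langle x+a\rangle^{s}\,da,
\end{equation*}
and exploit the hypothesis $|x|_{p}>1$ to collapse the projection functions on the translated variable. Since $|a/x|_{p}<1$ for every $a\in\mathbb{Z}_{p}$, the element $1+a/x$ lies in the disc $D=\{y\in\mathbb{C}_{p}:|y-1|_{p}<1\}$. Writing $x+a = x(1+a/x)$ in the product decomposition $\mathbb{C}_{p}^{\times}\simeq p^{\mathbb{Q}}\times\mu\times D$, I would conclude that $v_{p}(x+a)=v_{p}(x)$, $\widehat{x+a}=\hat{x}$, $\langle x+a\rangle = \langle x\rangle\cdot(1+a/x)$, and consequently $\omega_{v}(x+a)=\omega_{v}(x)$, which pulls both projection factors outside the integral.

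Next I would expand $(1+a/x)^{s}=\sum_{j\geq 0}\binom{s}{j}(a/x)^{j}$ using the binomial series of Proposition~\ref{analytic}, which is available precisely on the stated $s$-disc, together with the uniform bound $|a/x|_{p}<1$ for $a\in\mathbb{Z}_{p}$. Term-by-term integration against the Volkenborn measure then reduces matters to the classical identity $\int_{\mathbb{Z}_{p}}a^{j}\,da = B_{j}$. After reassembling the constants and using the trivial identity $\omega_{v}(x)\langle x\rangle^{s}=x\langle x\rangle^{s-1}$, the right-hand side of~(\ref{G1}) drops out.

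For the analytic properties I would observe that the range $|s-1|_{p}<p^{-1/(p-1)}|\log_{p}\langle x\rangle|_{p}^{-1}$ is exactly the disc on which Proposition~\ref{analytic} makes $s\mapsto\langle x\rangle^{s}$ analytic; on this disc the derived series converges absolutely and defines an analytic function, and the apparent pole coming from the $j=0$ term (namely $-1/(2s)$) matches the known simple pole of $\zeta_{p}(1-s,x)$ at $s=0$, giving an identity of analytic functions away from $s=0$. The sharper statement for $x$ in a finite extension $K$ of ramification index less than $p-1$ follows immediately from the stronger convergence disc in the same proposition, with no additional argument required.

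The main obstacle will be the justification of the interchange of sum and integral, together with the bookkeeping that keeps every expansion simultaneously inside its region of convergence. Specifically, I must establish uniform convergence in $a\in\mathbb{Z}_{p}$ of the binomial series for $(1+a/x)^{s}$ on the prescribed disc for $s$; this is what forces the bound $|s-1|_{p}<p^{-1/(p-1)}|\log_{p}\langle x\rangle|_{p}^{-1}$ in the hypothesis, and once that uniformity is in hand the remainder of the computation is essentially formal.
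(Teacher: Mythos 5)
Your proposal is essentially a re-derivation of the result that the paper simply cites: the paper's proof of Theorem~\ref{TP1} is a two-line substitution, quoting Tangedal--Young's expansion (\ref{TPe}) (their Theorem 4.1), replacing $s$ by $1-s$, and invoking the definition (\ref{a0sx}) together with $\omega_{v}(x)\langle x\rangle^{s}=x\langle x\rangle^{s-1}$. You instead go back to the integral definition (\ref{pz1}) and prove the expansion from scratch. Your intermediate steps are sound: for $|x|_{p}>1$ and $a\in\mathbb{Z}_{p}$ one indeed has $1+a/x\in D$, hence $v_{p}(x+a)=v_{p}(x)$, $\widehat{x+a}=\hat{x}$, $\langle x+a\rangle=\langle x\rangle(1+a/x)$ and $\omega_{v}(x+a)=\omega_{v}(x)$ (the further identity $\langle x+a\rangle^{s}=\langle x\rangle^{s}(1+a/x)^{s}$ needs the multiplicativity of $y\mapsto y^{s}$ on the relevant discs, via $\exp_{p}$ and $\log_{p}$; the paper uses the same identity without comment in (\ref{exp})), and the Volkenborn identity $\int_{\mathbb{Z}_{p}}a^{j}\,da=B_{j}$ plus the bookkeeping $-\tfrac{1}{2s}\,\omega_{v}(x)\langle x\rangle^{s}=x\langle x\rangle^{s-1}\cdot(-\tfrac{1}{2s})$ reproduces (\ref{G1}) exactly. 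So your route is more self-contained; what the paper's citation buys is precisely the delicate part you defer, namely the convergence/analyticity statements in $s$ and the legitimacy of termwise integration.

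On that deferred step there is one genuine weakness in your plan: uniform convergence in $a$ of $\sum_{j}\binom{s}{j}(a/x)^{j}$ is \emph{not} sufficient to interchange the sum with the Volkenborn integral. The Volkenborn integral is a limit of Riemann sums against an unbounded distribution and is not continuous for the sup norm on continuous functions; termwise integration requires convergence in the space $UD(\mathbb{Z}_{p})$, i.e.\ uniform convergence of the partial sums together with their first difference quotients (see \cite{Ro}). In the present situation this stronger convergence does hold and is easy to check: the difference quotients of $a\mapsto a^{j}$ on $\mathbb{Z}_{p}$ have sup norm at most $1$, so it suffices that $|\binom{s}{j}x^{-j}|_{p}\to 0$, which is exactly what the stated restrictions on $s$ (the disc $|s-1|_{p}<p^{-1/(p-1)}|\log_{p}\langle x\rangle|_{p}^{-1}$, resp.\ $|s-1|_{p}<|\pi|_{p}^{-1}p^{-1/(p-1)}$ in the ramified-extension case) are there to guarantee; this convergence analysis is the actual content of \cite[Theorem 4.1]{TP}. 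So either strengthen your justification from sup-norm convergence to convergence in the strict-differentiability norm, or do as the paper does and quote \cite[Theorem 4.1]{TP}, after which the whole proof collapses to the substitution $s\mapsto 1-s$.
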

  \begin{proof} By \cite[Theorem 4.1]{TP}, we have
 \begin{equation}~\label{TPe} \zeta_{p}(s,x)=\frac{x\langle x \rangle^{-s}}{s-1}\sum_{j=0}^{\infty}\binom{1-s}{j}B_{j}x^{-j},\end{equation}
combining with Eq.~(\ref{a0sx}), we have
 \begin{equation}
 \begin{aligned}
 a_{0}(s,x)&=\frac{1}{2}\zeta_{p}(1-s,x)\\&=x\langle x \rangle^{s-1}\sum_{j=0}^{\infty}\binom{s}{j}\left(-\frac{B_{j}}{2s}\right)x^{-j}.
 \end{aligned}
 \end{equation}
 \end{proof}
 \begin{remark} The above expansion~(\ref{TPe}) is in fact the $p$-adic analogue of the corresponding expansion for the classical Hurwitz zeta functions in \cite[p.~7]{Weil}.
 \end{remark}

 Now we show that $a_{n}(s,x), n\geq 1$, also have the similar expansions.

 \begin{theorem}\label{TP2}Suppose $x\in\mathbb{C}_{p}^{*}$ and $|x|_{p}>1$. Let $\sigma_{j}^{*}(n)=\sum_{\substack{d|n\\
(p,d)=1}}d^{j}$ and $n\geq 1$. Then there is an identity of analytic functions:
 \begin{equation}\label{G2} a_{n}(s,x)=x\langle x\rangle^{s-1}
\sum_{j=0}^{\infty}\binom{s}{j}\sigma_{j-1}^{*}(n)x^{-j},
 \end{equation} on the disc $|s-1|_{p} < p^{-1/(p-1)}|\rm{log}$$_{p}\langle x\rangle|_{p}^{-1}$. If in addition $x$ is chosen to lie in a finite extension $K$ of $\mathbb{Q}_{p}$ whose ramification index
 over $\mathbb{Q}_{p}$ is less than $p-1$, then this formula is valid for $s\in\mathbb{C}_{p}$ such that $|s-1|_{p} < |\pi|_{p}^{-1}p^{-1/(p-1)}$,
 where $(\pi)$ is the maximal ideal of the ring of integers $O_{K}$ of $K$. \end{theorem}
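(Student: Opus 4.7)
The plan is to combine the closed-form evaluation of Proposition~\ref{delta1} with a local binomial expansion, in direct parallel with the argument the authors give for Theorem~\ref{TP1}. First I would apply Proposition~\ref{delta1} to turn the integral into the finite sum
\begin{equation*}
a_n(s,x) = \sum_{\substack{d\mid n\\(p,d)=1}} d^{-1}(x+d)\langle x+d\rangle^{s-1},
\end{equation*}
valid on the region of analyticity described in Theorem~\ref{an1}.

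Next, for each divisor $d$ with $(d,p)=1$, I would factor $x+d = x(1+d/x)$ and use multiplicativity of $\langle\cdot\rangle$, which is immediate from the product decomposition $\mathbb{C}_p^\times \simeq p^{\mathbb{Q}} \times \mu \times D$ displayed in the paper: one gets $\langle x+d\rangle = \langle x\rangle\langle 1+d/x\rangle$. Since $|x|_p>1$ and $|d|_p=1$, the element $1+d/x$ lies in the unit disc $D=\{y:|y-1|_p<1\}$, so $v_p(1+d/x)=0$ and $\widehat{1+d/x}=1$, forcing $\langle 1+d/x\rangle = 1+d/x$. A short manipulation then yields
\begin{equation*}
(x+d)\langle x+d\rangle^{s-1} = x\langle x\rangle^{s-1}(1+d/x)^s.
\end{equation*}

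The remaining step is the binomial series: $(1+d/x)^s = \sum_{j\ge 0}\binom{s}{j}(d/x)^j$, which converges precisely on the disc of $s$ on which $\langle 1+d/x\rangle^s$ is analytic in the sense of Proposition~\ref{analytic}. Because the outer divisor sum is finite, I may interchange the two summations and obtain
\begin{equation*}
a_n(s,x) = x\langle x\rangle^{s-1}\sum_{j=0}^\infty \binom{s}{j}x^{-j}\sum_{\substack{d\mid n\\(p,d)=1}} d^{j-1} = x\langle x\rangle^{s-1}\sum_{j=0}^\infty \binom{s}{j}\sigma_{j-1}^*(n)x^{-j},
\end{equation*}
which is the claimed identity.

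The main obstacle will be the bookkeeping of the convergence regions: I must verify that the right-hand side really is analytic exactly on the disc $|s-1|_p < p^{-1/(p-1)}|\log_p\langle x\rangle|_p^{-1}$, and on the enlarged disc $|s-1|_p < |\pi|_p^{-1}p^{-1/(p-1)}$ in the ramified case, and that both sides agree as analytic functions on those regions. Since $|d/x|_p \le |x|_p^{-1}$ is uniformly small as $d$ ranges over the finite set of $p$-prime divisors of $n$, the radius of convergence of the binomial expansion is controlled by that of $\langle x\rangle^s$ from Proposition~\ref{analytic}, so this step goes through by exactly the same reasoning the authors used for Theorem~\ref{TP1}; once multiplicativity of $\langle\cdot\rangle$ is in hand, the underlying algebraic identity is elementary.
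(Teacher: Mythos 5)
Your argument is correct and is essentially the paper's own proof: the authors likewise start from Proposition~\ref{delta1}, use the expansion $\langle x+d\rangle^{s}=\langle x\rangle^{s}(1+d/x)^{s}=\langle x\rangle^{s}\sum_{j\ge 0}\binom{s}{j}d^{j}x^{-j}$ (which you justify in slightly more detail via the product decomposition of $\mathbb{C}_p^{\times}$), and then swap the finite divisor sum with the binomial series to obtain $x\langle x\rangle^{s-1}\sum_{j\ge 0}\binom{s}{j}\sigma_{j-1}^{*}(n)x^{-j}$. No substantive difference from the paper's proof.
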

\begin{proof}  Since for $x\in\mathbb{C}_{p}^{*}$ and $|x|_{p}>1$, we have
\begin{equation}~\label{exp} \begin{aligned}\langle x+a \rangle^{s}&=\langle x\rangle^{s}
\left(1+\frac{a}{x}\right)^{s}\\&=\langle x\rangle^{s}\sum_{j=0}^{\infty}\binom{s}{j}a^{j}x^{-j}.\end{aligned}\end{equation}
From Proposition \ref{delta1} and (\ref{exp}), we have
\begin{equation}
 \begin{aligned}
a_{n}(s,x)&=\sum_{\substack{d|n\\
(p,d)=1}} d^{-1}\omega_{v}(x+d)\langle x+d\rangle^{s}\\&=\sum_{\substack{d|n\\
(p,d)=1}} d^{-1}\omega_{v}(x)\langle x\rangle^{s}
\sum_{j=0}^{\infty}\binom{s}{j}d^{j}x^{-j}\\&=\sum_{\substack{d|n\\
(p,d)=1}} x\langle x\rangle^{s-1}
\sum_{j=0}^{\infty}\binom{s}{j}d^{j-1}x^{-j}
\\&=x\langle x\rangle^{s-1}
\sum_{j=0}^{\infty}\binom{s}{j}\sigma_{j-1}^{*}(n)x^{-j}.
\end{aligned}
\end{equation}
\end{proof}

Combining Theorems ~\ref{TP1} and ~\ref{TP2}, we obtain the following $p$-adic analogue of
Weil's expansion (\ref{weil2}).

\begin{theorem}\label{power} Suppose $x\in\mathbb{C}_{p}^{*}$ and $|x|_{p}>1$. Let $\sigma_{j}^{*}(n)=\sum_{\substack{d|n\\
(p,d)=1}}d^{j}$ and $n\geq 1$. Let $$G^{*}(s,x)=a_{0}(s,x)+\sum_{n=1}^{\infty}a_{n}(s,x)q^{n}$$ be the $p$-adic Eisenstein family defined in Definition~\ref{df1} and $$e_{p,j}(s)=-\frac{B_{j}}{2s}+ \sum_{n=1}^{\infty}\sigma_{j-1}^{*}(n)q^{n}.$$ Then there is an identity of analytic functions:
\begin{equation} G^{*}(s,x)=x\langle x\rangle^{s-1}
\sum_{j=0}^{\infty}\binom{s}{j}e_{p,j}(s)x^{-j},
 \end{equation}on the disc $|s-1|_{p} < p^{-1/(p-1)}|\rm{log}$$_{p}\langle x\rangle|_{p}^{-1}$. If in addition $x$ is chosen to lie in a finite extension $K$ of $\mathbb{Q}_{p}$ whose ramification index
 over $\mathbb{Q}_{p}$ is less than $p-1$, then this formula is valid for $s\in\mathbb{C}_{p}\setminus\{0\}$ such that $|s-1|_{p} < |\pi|_{p}^{-1}p^{-1/(p-1)}$,
 where $(\pi)$ is the maximal ideal of the ring of integers $O_{K}$ of $K$. \end{theorem}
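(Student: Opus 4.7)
The plan is to assemble Theorem~\ref{power} by direct substitution of the two previously established expansions into the definition of $G^{*}(s,x)$, and then interchange the order of summation between the Fourier index $n$ and the binomial index $j$.

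First I would write out $G^{*}(s,x) = a_{0}(s,x) + \sum_{n=1}^{\infty} a_{n}(s,x) q^{n}$ from Definition~\ref{df1}, and plug in the expansion of $a_{0}(s,x)$ supplied by Theorem~\ref{TP1},
\[
a_{0}(s,x) = x\langle x\rangle^{s-1}\sum_{j=0}^{\infty}\binom{s}{j}\Bigl(-\frac{B_{j}}{2s}\Bigr)x^{-j},
\]
together with the expansion of $a_{n}(s,x)$ supplied by Theorem~\ref{TP2},
\[
a_{n}(s,x) = x\langle x\rangle^{s-1}\sum_{j=0}^{\infty}\binom{s}{j}\sigma_{j-1}^{*}(n)x^{-j}\qquad(n\geq 1).
\]
The common prefactor $x\langle x\rangle^{s-1}$ factors out of every term, leaving the coefficient of $x^{-j}$ in the bracket equal to
\[
\binom{s}{j}\Bigl(-\frac{B_{j}}{2s}\Bigr) + \sum_{n=1}^{\infty}\binom{s}{j}\sigma_{j-1}^{*}(n)q^{n} = \binom{s}{j}e_{p,j}(s),
\]
after interchanging the summation in $n$ and $j$. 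The identity claimed in the theorem then reads off directly.

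The only real step that needs justification is the interchange of the double sum. Since we are working in the $p$-adic setting where convergence is tame — the inner series in $j$ converges on the stated disc by Theorem~\ref{TP1}/\ref{TP2}, and the $q$-expansion is merely a formal power series in $q$ whose coefficient of each $q^{n}$ is a finite or convergent $p$-adic quantity — the interchange is legitimate: coefficient-wise in $q$, one just compares the $q^{0}$ term (giving the $a_{0}$ expansion) with the $q^{n}$ term for $n\geq 1$ (giving the $a_{n}$ expansion), and both sides agree on each $q^{n}$ as an analytic function of $s$ on the prescribed disc. I expect this formal manipulation to be the main, and essentially only, issue; the analytic region for $s$ (and, in the case of a finite extension $K$ with small ramification, the enlarged disc $|s-1|_{p}<|\pi|_{p}^{-1}p^{-1/(p-1)}$) is inherited verbatim from the two input theorems, with the exclusion $s\neq 0$ coming from the $1/(2s)$ appearing in the $j=0$ term of Theorem~\ref{TP1}. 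Once the interchange is recorded, the proof reduces to one line of bookkeeping.
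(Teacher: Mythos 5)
Your proposal is correct and follows essentially the same route as the paper: substitute the expansions of $a_{0}(s,x)$ and $a_{n}(s,x)$ from Theorems~\ref{TP1} and~\ref{TP2} into Definition~\ref{df1}, factor out $x\langle x\rangle^{s-1}$, and interchange the sums over $n$ and $j$ to recognize $e_{p,j}(s)$. Your added remark on justifying the interchange (coefficient-wise in $q$) and on the origin of the exclusion $s\neq 0$ is consistent with, and slightly more explicit than, the paper's one-line computation.
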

 \begin{proof} Since \begin{equation}~\label{G} G^{*}(s,x)=a_{0}(s,x)+\sum_{n=1}^{\infty}a_{n}(s,x)q^{n},\end{equation} substituting the equations (\ref{G1}) and (\ref{G2}) to (\ref{G}), we have
 \begin{equation}
 \begin{aligned}
 G^{*}(s,x)&=a_{0}(s,x)+\sum_{n=1}^{\infty}a_{n}(s,x)q^{n}\\&=x\langle x \rangle^{s-1}\sum_{j=0}^{\infty}\binom{s}{j}\left(-\frac{B_{j}}{2s}\right)x^{-j}+\sum_{n=1}^{\infty}\left(x\langle x\rangle^{s-1}\sum_{j=0}^{\infty}\binom{s}{j}\sigma_{j-1}^{*}(n)x^{-j}\right)q^{n}\\&=x\langle x \rangle^{s-1}\sum_{j=0}^{\infty}\binom{s}{j}\left(-\frac{B_{j}}{2s}\right)x^{-j}+ x\langle x\rangle^{s-1}\sum_{j=0}^{\infty}\binom{s}{j}\left(\sum_{n=1}^{\infty}\sigma_{j-1}^{*}(n)q^{n}\right)x^{-j} \\&=x\langle x \rangle^{s-1}\sum_{j=0}^{\infty}\binom{s}{j}\left(-\frac{B_{j}}{2s}+\sum_{n=1}^{\infty}\sigma_{j-1}^{*}(n)q^{n}\right)x^{-j}
 \\&=x\langle x\rangle^{s-1}
\sum_{j=0}^{\infty}\binom{s}{j}e_{p,j}(s)x^{-j},
\end{aligned}
 \end{equation}
 where $e_{p,j}(s)=-\frac{B_{j}}{2s}+ \sum_{n=1}^{\infty}\sigma_{j-1}^{*}(n)q^{n}.$
 \end{proof}

\section*{Acknowledgment}
The authors are enormously grateful to the anonymous referee for his/her very careful reading of this paper, and for his/her many valuable and detailed
suggestions.

This work was supported by the Kyungnam University Foundation Grant, 2014.

\bibliography{central}

\begin{thebibliography}{00}




\bibitem{Co2}
H. Cohen, \textit{Number Theory Vol. II: Analytic and Modern Tools},
Graduate Texts in Mathematics, 240. Springer, New York, 2007.

\bibitem{Iwa}
K. Iwasawa,
\textit{Lectures on $p$-adic $L$-functions},
Ann. Math. Studies \textbf{74}, Princeton, New Jersey, 1972.


\bibitem{katz}
N. M. Katz, \textit{$p$-adic $L$-functions via moduli of elliptic
curves, Algebraic geometry}, Proc. Sympos. Pure Math., Vol. 29,
Humboldt State Univ., Arcata, Calif., 1974), pp. 479--506. Amer.
Math. Soc., Providence, R. I., 1975.


\bibitem{Kashio}T. Kashio, \textit{On a $p$-adic analogue of Shintani's
formula}, J. Math. Kyoto Univ. \textbf{45} (2005),  99--128.



\bibitem{Pan}A. A. Panchishkin, \textit{On $p$-adic integration in spaces of modular
forms and its applications}, Algebraic geometry, 12. J. Math. Sci.
(N. Y.)  115  (2003),  no. 3, 2357--2377.


\bibitem{Ro}
A. M. Robert, \textit{A course in $p$-adic analysis}, Graduate Texts
in Mathematics, 198, Springer-Verlag, New York, 2000.

\bibitem{Serre}J.-P. Serre, Formes modulaires et fonctions z\^{e}ta $p$-adiques. Modular
functions of one variable, III (Proc. Internat. Summer School,
Univ. Antwerp, 1972),  pp. 191--268. Lecture Notes in Math., Vol.
350, Springer, Berlin, 1973.

\bibitem{SC} W. H. Schikhof, \textit{Ultrametric Calculus. An Introduction to p-Adic
Analysis}, Cambridge University Press, London, 1984.


\bibitem{TP}
B. A. Tangedal and P. T. Young, \textit{On $p$-adic multiple zeta
and log gamma functions}, J. Number Theory \textbf{131} (2011),
1240--1257.

\bibitem{Volkenborn1} A. Volkenborn, \textit{Ein $p$-adisches Integral und seine Anwendungen I}, Manuscripta Math. \textbf{7} (1972) 341--373.

\bibitem{Volkenborn2} A. Volkenborn, \textit{Ein $p$-adisches Integral und seine Anwendungen II}, Manuscripta Math. \textbf{12} (1974) 17--46.


\bibitem{Wa2} L. C. Washington,
\textit{Introduction to Cyclotomic Fields},
2nd ed., Springer-Verlag, New York, 1997.

\bibitem{Weil}
A. Weil,  \textit{Elliptic functions according to Eisenstein and Kronecker}, Reprint of the 1976 original, Classics in Mathematics, Springer-Verlag, Berlin, 1999.

\end{thebibliography}

\end{document}